\def\l@subsection{\@tocline{2}{0pt}{2.5pc}{5pc}{}}
\DeclareMathOperator{\sgn}{sgn}
\DeclareSymbolFont{largesymbol}{OMX}{yhex}{m}{n}
\DeclareMathAccent{\Widehat}{\mathord}{largesymbol}{"62}
\newcommand*\di{\mathop{}\!\mathrm{d}}
\def\e{\varepsilon}
\numberwithin{equation}{section}              % numerazionedelleequazioni
\newtheorem{theorem}{Theorem}[section]
\newtheorem{lemma}{Lemma}[section]
\newtheorem{proposition}{Proposition}[section]
\newtheorem*{proposition*}{Proposition}
\newtheorem*{corollary*}{Corollary}
\newtheorem{definition}{Definition}[section]
\newtheorem*{definitions*}{Definitions}
\newtheorem*{acknowledgements*}{Acknowledgements}
\newtheorem*{conjecture*}{\bf Conjecture}
\newtheorem*{example*}{\bf Example}
\theoremstyle{remark}
\newtheorem{remark}{\bf Remark}[section]
\begin{document}
\date{}                                     %TITOLO\date\today
%Data

\author{Yu Gao}
\address[Y. Gao]{Department of Applied Mathematics, The Hong Kong Polytechnic University, Hung Hom, Kowloon, Hong Kong}
\email{mathyu.gao@polyu.edu.hk}

\title[Conservative peakons of mCH]{On conservative sticky peakons to the modified Camassa-Holm equation}

\begin{abstract}
We use a sticky particle method to show global existence of (energy) conservative sticky $N$-peakon solutions to the modified Camassa-Holm equation. A dispersion regularization is provided as a selection principle for the uniqueness of conservative $N$-peakon solutions. The dispersion limit avoids the collision between peakons, and numerical results show that the dispersion limit is exactly the sticky peakons. At last, when the splitting of peakons is allowed, we give an example to show the non-uniqueness of conservative solutions.
\end{abstract}
\maketitle

{\small\keywords{\textbf{\textit{\emph{MSC 2020}}:} 35C08, 35A01 }}

{\small\keywords{\textbf{\textit{\emph{Keywrods}}:} integrable system, sticky particles, conservative solutions, non-collision regularization, singular product}}

\section{Introduction}
In this paper, we are going to study the (energy) conservative peakons of the modified Camassa-Holm (mCH) equation:
\begin{align}\label{eq:mCH}
m_t+[(u^2-u^2_x)m]_x=0,\quad m=u-u_{xx},\quad x\in\mathbb{R},~~t>0.
\end{align}
This equation is an  integrable system \cite{fokas1995class,Fuchssteiner,Olever,Qiao2} with a bi-Hamiltonian structure \cite{GuiLiuOlverQu,Olever} and a Lax-pair \cite{Qiao2}. 
Physically, the mCH equation models the unidirectional propagation of surface waves in shallow water over a flat bottom \cite{fokas1995class}, where $u$ represents the free surface elevation in dimensionless variables.  It was rediscovered from the two-dimensional Euler equations \cite{Qiao2}. The mCH equation was also shown to arise from an intrinsic (arc-length preserving) invariant planar curve flow in Euclidean geometry \cite{GuiLiuOlverQu}. 
Moreover, applying tri-Hamiltonian duality method to the bi-Hamiltonian representation of the KdV and the modified KdV equations leads to the Camassa-Holm (CH) equation 
\begin{align}\label{eq:CH}
m_t+(mu)_x+mu_x=0,\quad m=u-u_{xx},
\end{align}
and the mCH equation \eqref{eq:mCH} respectively; see \cite{fokas1995a,fokas1997plethora,Fuchssteiner,Olever,kang2016liouville}. This is also one of the reason why we call \eqref{eq:mCH} as the modified Camassa-Holm equation. 
This paper concerns the special weak solutions to the mCH equation, which are called peakons and they can be represented by the fundamental solution $G(x)=\frac{1}{2}e^{-|x|}$ to the Helmholtz operator $\mathrm{I}-\partial_{xx}$ in the following form:
\begin{align}\label{eq:Npeakon}
	u^N(x,t)=\sum_{i=1}^Np_i(t)G(x-x_i(t)),~~m^N(x,t)=\sum_{i=1}^Np_i(t) \delta(x-x_i(t)).
\end{align}
These solutions are also referred to as the $N$-peakon (or multi-peakon) solutions. 
Let us compare the mCH equation \eqref{eq:mCH} with the CH equation \eqref{eq:CH} and introduce the weak solutions to the CH equation first to get a better understanding about the special structures for weak solutions to the mCH equation.

Both the CH equation and the mCH equation are integrable systems and they share lots of similarities:

\textbf{$\bullet$}  \emph{Same energy:} The  energy below is conserved for the strong solutions of these two equations:
\begin{align}\label{eq:energy11}
	\mathcal{H}(u)(t):=\int_{\mathbb{R}}(u^2+u_x^2)(x,t)\di x.
\end{align}

\textbf{$\bullet$} \emph{Finite time blow-up behavior:} Both of these two equations possess smooth solutions that develop singularities in finite time; see \cite{mckean1998breakdown,constantin1998global,constantin1998wave,constantin2000blow,brandolese2014local} for the CH equation and \cite{GuiLiuOlverQu,chen2015oscillation,LiuOlver} for the mCH equation.

\textbf{$\bullet$} \emph{Solitons as weak solutions:} Comparing with smooth solitons for the KdV equation and the modified KdV equation, the solitons given by \eqref{eq:Npeakon} for the CH equation and the mCH equation are not smooth, which are called peakons. Moreover, the peakons are weak solutions to these two equations.

\

The CH equation and the mCH equation also have lots of discrepancies:

\textbf{$\bullet$} \emph{Lifespan:}
The lifespan for strong solutions to the CH equation \eqref{eq:CH} with initial data $\e u_0$ is given by $T_{max}\geq C/\e$ for some constant $C=C(u_0)$; see  \cite{danchin2001few,molinet2004well}.
While for the mCH equation \eqref{eq:mCH}, there holds:
$\tilde{T}_{max}\geq \tilde{C}/\e^2$ for some constant $\tilde{C}=\tilde{C}(u_0)$; see \cite[Eq. (12)]{gao2018modified}.

\textbf{$\bullet$} \emph{Blow-up behaviors:}
For the CH equation, the strong solutions blow up via a process that captures the features of breaking waves \cite{whitham2011linear,constantin1998global,constantin1998wave,constantin2000blow}: the solution $u$ remains bounded, but its slope $u_x$ becomes unbounded. For the mCH equation at the blow-up time, the solution $u$ and its derivative $u_x$ remain bounded, but its second order derivative $u_{xx}$ becomes unbounded \cite{GuiLiuOlverQu,chen2015oscillation,gao2018modified}. %At the blow-up time, the solution $(u,m)$ to the CH equation belongs to $H^1(\mathbb{R})$ 

\textbf{$\bullet$} \emph{Formation of peakons:}  For the CH equation, since $u_x$ goes to negative infinity at the blow-up time, peakons cannot be formed because a peakon solution satisfies $u_x^N(\cdot,t)\in L^\infty(\mathbb{R})\cap BV(\mathbb{R})$. However, peakons can be formed for the mCH equation at the blow-up time; see \cite[Theorem 1.3]{gao2018modified}.

One of the most interesting issues, both physically and mathematically, is the question of the continuation of solutions after blow-up time. The well-posedness of weak solutions to the CH equation \eqref{eq:CH} is well studied in the last two decades. For the $N$-peakon solutions (defined by \eqref{eq:Npeakon}) to the CH equation, $\{(p_i(t),x_i(t))\}_{i=1}^N$ evolve by a Hamiltonian system; see system \eqref{eq:CHpeakons}.
When $\{p_i(0)\}_{i=1}^N$ have the same sign, global solutions $\{(x_i(t),p_i(t))\}_{i=1}^N$ can be derived due to the non-collision between peakons; see \cite{CamassaLee,holden2006,AlinaLiu}. In this case,  the Hamiltonian of the $N$-peakon solution to the CH equation  is a constant, which is the same as the energy $\mathcal{H}(u^N)(t)$ (defined by \eqref{eq:energy11}) of the $N$-peakon solution. 
For more general initial data $(u_0,m_0)$, if $u_0\in H^1(\mathbb{R})$ and $m_0=u_0-u_{0xx}\in\mathcal{M}_+(\mathbb{R})$ (finite nonnegative Radon measure space), global existence and uniqueness of weak solutions $u\in C([0,\infty); H^1(\mathbb{R}))$ and $m\in C([0,\infty); \mathcal{M}_+(\mathbb{R}))$ to the CH equation were obtained \cite{Constantin,AlinaLiu,holden2006}. Note that the above initial data essentially prevent the formation of singularities, and the total energy  of the corresponding weak solution is conserved. Indeed, when singularity happens, the solution $u$ remains in $H^1(\mathbb{R})$, but  $m$ is no longer a Radon measure. Hence, it is nature to study the weak solution with an initial datum $u_0\in H^1(\mathbb{R})$ and without the restriction $m_0\in\mathcal{M}_+(\mathbb{R})$. The first result for global existence of weak solutions to the CH equation \eqref{eq:CH} in $H^1(\mathbb{R})$ was obtained in \cite{Xin} by the vanishing viscosity approach. The vanishing viscosity approach singles out the (energy) dissipative solutions, and the uniqueness of their solutions is unknown except for the case when the initial datum $m_0\in\mathcal{M}_+(\mathbb{R})$; see  \cite{xin2002uniqueness}. Several years later, (energy) conservative (weak) solutions to the CH equation was constructed in \cite{Bressan2,holden2007global1,bressan2005optimal}. The key observation is that part of the energy is transferred into singular measures at the blow-up time. 
If one only considers the solution $u$ to the CH equation, there might be some energy dissipation and $u\notin C([0,\infty);H^1(\mathbb{R}))$. 
To describe the conservation of the total energy, another energy measure variable $\mu$ was introduced, and global conservative solutions were obtained with $\di\mu_{ac}(t)=(u^2+u_x^2)\di x$, where $\mu_{ac}(t)$ is the absolutely continuous part of $\mu$ with respect to the Lebesgue measure; see \cite{Bressan2,holden2007global1}. 
Similar methods can also be applied to obtain the dissipative  solutions; see \cite{bressan2007global,holden2009dissipative}. The uniqueness of weak solutions was obtained by the method of characteristics; see \cite{bressan35uniqueness,bressan2016uniqueness} for the uniqueness of the conservative solutions, and see \cite{jamroz2016uniqueness} for the uniqueness of the dissipative solutions.
The above generalized framework of the CH equation were also used to obtain global conservative or dissipative $N$-peakon solutions %even if collision between peakons happens 
\cite{holden2007global,holden2008global}.

Comparing with the CH equation \eqref{eq:CH}, the well-posedness theory of the weak solutions to the mCH equation \eqref{eq:mCH} is far from satisfactory.  The local well-posedness of strong solutions to the mCH equation was well studied in \cite{FuGui,Himonas,gao2018modified}.   The weak solutions to the mCH equation \eqref{eq:mCH} were first studied in \cite{GuiLiuOlverQu}, where  global  one peakon weak solutions were constructed in the form: $u(x,t)=pe^{-|x-2p^2t/3|}$ for $p\in\mathbb{R}$; see  \cite[Theorem 6.1]{GuiLiuOlverQu}. The peakon is a weak solution  to the mCH equation \eqref{eq:mCH} satisfying (see \cite[Definition 3.1]{GuiLiuOlverQu})
\begin{equation}\label{eq:weaksolution1}
\begin{aligned}
\int_0^T\int_{\mathbb{R}}u(\varphi_t-\varphi_{txx})\di x\di t -\frac{1}{3}\int_0^T\int_{\mathbb{R}}u^3_x\varphi_{xx}\di x\di t-\frac{1}{3}\int_0^T\int_{\mathbb{R}}u^3\varphi_{xxx}\di x\di t\\
+\int_0^T\int_{\mathbb{R}}(u^3+uu_x^2)\varphi_x\di x\di t+\int_{\mathbb{R}}u_0[\varphi(x,0)-\varphi_{xx}(x,0)]\di x=0
\end{aligned}
\end{equation}
for any $\varphi\in C_c^\infty(\mathbb{R}\times[0,T))$. The $N$-peakon solutions were also studied in the same paper \cite{GuiLiuOlverQu} by taking \eqref{eq:Npeakon} into the definition of weak solutions \eqref{eq:weaksolution1}, and some calculations show that $\{p_i\}_{i=1}^N$ are constants independent of time $t$. Moreover, the following ODE system for the trajectories $\{x_i(t)\}_{i=1}^N$ holds before collision (i.e., $x_1(t)<x_2(t)<\cdots<x_N(t)$) \cite[Eq. (6.24)]{GuiLiuOlverQu}:
\begin{align}\label{eq:ODE1}
\frac{\di}{\di t}x_i(t)=\frac{1}{6}p_i^2+\frac{1}{2}\sum_{j<i}p_ip_je^{x_j-x_i}+\frac{1}{2}\sum_{j>i}p_ip_je^{x_i-x_j}+\sum_{1\leq m<i<n\leq N}p_mp_ne^{x_m-x_n}.
\end{align}
Notice that the peakons given by \eqref{eq:ODE1} might collide with each other in finite time even if $\{p_i\}_{i=1}^N$ have the same sign (see \cite[Proposition 4.5]{GaoLiu}), which is different from the CH peakons. After the collision of peakons, two different methods were provided recently to extend the trajectories of peakon solutions globally, i.e., a sticky particle method \cite{GaoLiu} and a double mollification method \cite{gao2018dispersive}. Both of the above two methods generate global $N$-peakon weak solutions in the sense of \eqref{eq:weaksolution1}. Moreover, for any initial datum $m_0
\in \mathcal{M}(\mathbb{R})$, a global weak solution to the mCH equation \eqref{eq:mCH} can be constructed from the $N$-peakon solutions via a mean field limit method \cite{GaoLiu}. The uniqueness of the weak solutions in the sense of \eqref{eq:weaksolution1} is not proved.
As mentioned above, the conservative or dissipative weak solutions to the CH equation are not continuous in time with values in $H^1(\mathbb{R})$ due to potential changing of energy into singular measures. For the weak solution $u$ to the mCH equation, no energy is transferred into singular measures and $u\in C([0,\infty);H^1(\mathbb{R}))$; see \cite[Theorem 3.4]{GaoLiu}.
However, the energy $\mathcal{H}(u^N)(t)$ is not conservative or dissipative for the $N$-peakon solutions constructed by the sticky particle method or the double mollification method, and these weak solutions are not unique; see \cite[Remark 2.3]{gao2018dispersive} and \cite[Proposition 4.5]{GaoLiu}.  
Energy conservation or dissipation is essential to establish the uniqueness of weak solutions for the other similar integrable systems such as the CH equation  \cite{bressan2005optimal,bressan2016uniqueness,bressan35uniqueness,jamroz2016uniqueness}, the Hunter-Saxton equation \cite{cieslak2016maximal,dafermos2011generalized,zhang2000existence,gao2021regularity}, and the Novikov equation \cite{chen2015existence}. 
Hence, the lack of relation between weak solutions and the energy  might be the main difficulty for the problem of uniqueness of weak solutions to the mCH equation in the sense of \eqref{eq:weaksolution1}. 

Recently, another different class of $N$-peakon solutions to the mCH equation was proposed in \cite{chang2016lax,chang2017liouville,chang2018lax}, where the $N$-peakon  solutions are defined via the following ODE system:
\begin{align}\label{eq:ODE}
	\frac{\di}{\di t}x_i(t)=\frac{1}{2}\sum_{j<i}p_ip_je^{x_j-x_i}+\frac{1}{2}\sum_{j>i}p_ip_je^{x_i-x_j}+\sum_{1\leq m<i<n\leq N}p_mp_ne^{x_m-x_n}
\end{align}
for $i=1,2,\cdots,N$ and $x_1(t)<x_2(t)<\ldots<x_N(t)$. Although the $N$-peakon solutions obtained by \eqref{eq:ODE} are not weak solutions in the sense of \eqref{eq:weaksolution1}, they are compatible with the Lax integrability \cite{chang2016lax,chang2018lax}. Moreover, the energy $\mathcal{H}(u^N)(t)$ is conserved; see \cite[Theorem 2.1]{chang2016lax}. We will provide another more algebraic way to prove the energy conservation in this paper; see Proposition \ref{pro:energyconservation} and Lemma \ref{lmm:Npeakonlemmano collision} below. The conservative $N$-peakon solutions also form a Liouville integrable Hamiltonian system; see \cite{chang2017liouville}. However, the global $N$-peakon solutions in the sense of \eqref{eq:ODE} were only obtained when $p_i>0$ ($i=1,2,\cdots,N$) and together with some other conditions; see \cite[Theorem 5.1, 6.1]{chang2018lax}.  The general global existence and uniqueness of conservative  solutions  to the mCH equation is still an open question.

In this paper, we are going to focus on global existence and uniqueness of conservative $N$-peakon solutions to the mCH equation \eqref{eq:mCH} with arbitrary $p_i\in\mathbb{R}$. 
Since the conservative peakons given by \eqref{eq:ODE} are not weak solutions in the sense of \eqref{eq:weaksolution1}, we first need to get a new definition for weak solutions. According to the behavior of strong solutions at the blow-up time \cite[Theorem 5.2]{gao2018modified}, it is nature to study the conservative solutions to the mCH equation with an initial datum $u_0$ in the space $H^1(\mathbb{R})$ and $u_0,~~u_{0x}\in BV(\mathbb{R})$, $m_0=u_0-u_{0xx}\in\mathcal{M}(\mathbb{R})$. Under this setting, the term $mu_x^2$ (or more precisely: $u_x^2u_{xx}$) in the mCH equation becomes a singular product between a BV function $u_x^2$ and a Radon measure $m$. To illustrate the idea for this product, we start from $N$-peakon solutions.
For an $N$-peakon solution $u^N$, the trajectories should satisfy the following ODE system in some sense:
\[
\frac{\di}{\di t}x_i(t)=[(u^N)^2-(u_x^N)^2](x_i(t),t),\quad i=1,2,\cdots,N.
\]
Since the derivative $u_x^N(\cdot,t)$ is a BV function with jump discontinuities at $x_i(t)$, $i=1,2,\cdots,N$, the value of $(u_x^N)^2(x_i(t),t)$ is vague.
Before collision of peakons, direct calculations show that system \eqref{eq:ODE1} corresponds to assigning value 
\[
\frac{1}{3}\left[\overline{\left(u_x^N\right)^2}+2\left(\overline{u_x^N}\right)^2\right](x_i(t),t)
\] 
to $(u^N_x)^2(x_i(t),t)$, while system \eqref{eq:ODE} is to define $(u^N_x)^2(x_i(t),t)$  by $\overline{(u_x^N)^2}(x_i(t),t)$; also see  \cite{chang2016lax,chang2018lax}. Here, for a BV function $f:\mathbb{R}\to\mathbb{R}$, we use $\bar{f}(x)$ to denote the average of the right limit $f(x+)$ and the left limit $f(x-)$, i.e., 
\[
\bar{f}(x)=\frac{1}{2}[f(x+)+f(x-)].
\] 
Hence, to be compatible with \eqref{eq:ODE}, we give a new definition of weak solutions to the mCH equation:
\begin{definition}\label{def:weaksolution}
For $u_0\in H^1(\mathbb{R}),~~u_0-u_{0xx}=m_0\in\mathcal{M}(\mathbb{R})$, a function
\[
u\in C([0,T);H^1(\mathbb{R}))\cap L^\infty(0,T; W^{1,\infty}(\mathbb{R}))\cap W^{1,\infty}(0,T; L^{\infty}(\mathbb{R}))
\]
is said to be a  weak solution of the mCH equation if
\begin{align}\label{def:weak2}
\mathcal{L}(u,\varphi)+\int_{\mathbb{R}}\varphi(x,0)m_0(\di x)=0
\end{align}
holds for all $\varphi\in C_c^\infty(\mathbb{R}\times[0, T))$. Here 
\begin{equation}\label{eq:weakfunctional}
\begin{aligned}
\mathcal{L}(u,\varphi):=
&\int_0^T\int_{\mathbb{R}}u(\varphi_t-\varphi_{txx})\di x\di t+\int_0^T\int_{\mathbb{R}}(u^3+2uu_x^2)\varphi_x\di x\di t\\
&-\frac{1}{3}\int_0^T\int_{\mathbb{R}}u^3\varphi_{xxx}\di x\di t-\int_0^T\int_{\mathbb{R}}\varphi_{x}\overline{u_x^2}m(\di x\di t).
\end{aligned}
\end{equation}
If $T=+\infty$, we call $u$ as a global weak solution to the mCH equation.
\end{definition}
We need to be careful with the last integral in \eqref{eq:weakfunctional}, since it contains a singular product term between a BV function $\overline{u_x^2}$ and a Radon measure $m$. This kind of definition for weak solutions was used in \cite{zheng1994existence}, where Zheng and Majda studied measure weak solutions to one-component Vlasov-Poisson and Fokker-Planck-Poisson systems. They used Vol'pert's calculus about BV functions in \cite{vol1967spaces,volpert1985analysis}  to study the singular product between a BV function and a Radon measure. For an $N$-peakon solution $u^N$, the  last integral in \eqref{eq:weakfunctional} satisfies:
\begin{multline*}
	-\int_0^T\int_{\mathbb{R}}u^N (u_x^N)^2(x,t)\varphi_{x}(x,t)\di x\di t+ \int_0^T\int_{\mathbb{R}}\overline{(u^N_x)^2}(x,t)\varphi_{x}(x,t)u^N_{xx}(\di x\di t)\\
	=-\int_0^T\sum_{i=1}^Np_i\overline{(u^N_x)^2}(x_i(t),t)\varphi_{x}(x_i(t),t)\di t.
\end{multline*}
When $x_1(t)<x_2(t)<\cdots<x_N(t)$, substituting $u^N(x,t)=\sum_{i=1}^Np_iG(x-x_i(t))$ into \eqref{def:weak2} and taking integration by parts, we find the traveling speeds of $\{x_i(t)\}_{i=1}^N$ are exactly the same as the one given by \eqref{eq:ODE}. Hence, Definition \ref{def:weaksolution} is reasonable. Moreover, we can use \eqref{eq:ODE} to obtain $N$-peakon  solutions in the sense of Definition \ref{def:weaksolution} before collision.  However, the trajectories given by \eqref{eq:ODE} might collide with each other in finite time even if $p_i>0$, $i=1,\cdots,N$; see Figure \ref{fig:threetrajectories1}. This is totally different with the $N$-peakon solutions to the CH equation. 
There are two considerable challenges in exploiting the questions for global existence. The first one is how to extend the trajectories globally after collision. The second one is how to guarantee the energy conservation after collision. 
We are going to handle these questions by using the sticky particle method proposed in our previous paper \cite{GaoLiu}. The main idea is as follows. Assume the peakons stick together and combine their momentum $p_i$ whenever they collide. Then, view the collision time as a new starting point and use system \eqref{eq:ODE} again to extend the trajectories. Repeat this process whenever collision happens. Because the trajectories can collide at most $N-1$ times, we can construct global sticky trajectories $\{x_i(t)\}^N_{i=1}$. Then, we use the global sticky trajectories to construct a global sticky peakon weak solution, and the sticky peakon weak solutions turn out to be energy conservative. See details in Section \ref{sec:sticky}.

After obtaining global conservative $N$-peakon solutions, our next goal is to provide a selection principle for the uniqueness of conservative $N$-peakon solutions.
As we mentioned for the other integrable systems, energy conservation or dissipation is very important for the uniqueness of weak solutions. Therefore, after we obtain conservative $N$-peakon solutions to the mCH equation, it is nature to hope for the uniqueness. However, the uniqueness is relatively difficult as explained below.
For the conservative solutions to other integrable systems (e.g., the CH equation and the Hunter-Saxton equation), one can apply the generalized characteristics method to show the uniqueness; see \cite{bressan35uniqueness,gao2021regularity}. The equation for the energy density plays a crucial role. Take the CH equation \eqref{eq:CH} as an example. The conservative solutions to the CH equation are also weak solutions to the following equation for the energy density $u^2+u_x^2$ of the CH equation:
\begin{align}\label{eq:energyeq}
(u^2+u_x^2)_t+[u(u^2+u_x^2)]_x=[u(u^2-2P)]_x, \quad P=G\ast\left(u^2+\frac{u_x^2}{2}\right).
\end{align}
Without the constrain of the energy conservation equation \eqref{eq:energyeq}, it is impossible to obtain the uniqueness of general conservative solutions to the CH equation; see \cite[Theorem 3]{bressan2005optimal}. For the mCH equation \eqref{eq:mCH}, one can also formally derive the following energy conservation equation similar to \eqref{eq:energyeq}:
\[
(u^2+u_x^2)_t+\left(\frac{1}{6}u^4+u^2u_x^2-\frac{1}{2}u_x^4\right)_x+\frac{2}{3}\left[uG_x\ast u_x^3+uG\ast\left(\frac{2}{3}u^3+uu_x^2\right)\right]_x=0.
\]
However, in the process to obtain the above equation, the relation $u_x^2u_{xx}=(u_x^3)_x/3$ was used, which is correct for smooth solutions. But for a weak solution $u$, the derivative $u_x$ is a BV function, and according to Vol'pert's calculus for BV functions \cite{vol1967spaces,volpert1985analysis}, we have
\[
(u_x^3)_x=\left[\overline{u_x^2} +2(\overline{u_x})^2\right]u_{xx},
\]
which is not compatible with  weak solutions in the sense of Definition \ref{def:weaksolution}. Hence, it is difficult to apply the generalized characteristics method to show the uniqueness of conservative solutions to the mCH equation.  
In this paper, we are going to provide a potential method for the uniqueness of conservative $N$-peakon solutions via a dispersion limit (see \eqref{eq:approximateODE}), which is an approximation of the mCH equation in the Lagrangian coordinates. The dispersive regularization will also be referred to as particle blob method in this paper, because it shares some similarities with the traditional vortex blob method for the incompressible Euler equation. It is well known that the solutions to the incompressible Euler equation can be approximated by a finite superposition of interacting elementary solutions, and this method is known as point-vortex method or vortex blob method; see \cite{chorin1973discretization,anderson1985vortex,cottet2000vortex,majda2002vorticity,hauray2009wasserstein}. The methods are based on the Lagrangian equation for particle trajectories of the  incompressible Euler equation.  We are going to show that the dispersive regularization prevent collisions between peakons; see Proposition \ref{pro:nocollisionepsilon}. Numerical simulations show that the dispersive limit is exactly the sticky particles; see Figure \ref{fig:dispersivelimit} and Figure \ref{fig:Four}. Hence,   the dispersive regularization method gives a potential selection principle for the conservative $N$-peakon solutions. For the rigorous proof however,  we can only prove that the limit of the  regularization are the sticky trajectories for $N=3$; see Theorem \ref{thm:dispersionlimit}  and Proposition \ref{pro:dispersionlimit}.  Moreover, if the splitting of peakons is allowed, then we can provide an example to show non-uniqueness of peakons; see Section \ref{sec:splitting}.

The rest of this paper is organized as follows. In Section \ref{sec:algebraic}, some algebraic properties of the vector field of system \eqref{eq:ODE} will be provided. The details for the construction of global conservative sticky $N$-peakon solutions will be given in Section \ref{sec:sticky}. In Section \ref{sec:dispersion}, we will introduce the dispersive regularization (or the particle blob method) for a potential selection principle of uniqueness. We will provide an example for the non-uniqueness of peakon solutions in Section \ref{sec:splitting} when the splitting of peakons is allowed.

\section{Conservative sticky peakons}
\subsection{ODE system and some algebraic properties}\label{sec:algebraic}
First, we are going to show some algebraic properties for the vector field of  system \eqref{eq:ODE}, which are important for the energy conservation of peakon solutions to the mCH equation. 
We have the following results.
\begin{proposition}\label{pro:energyconservation}
Let $p_i\in\mathbb{R}$, $i=1,2,\cdots,N$, and $x_1<x_2<\cdots<x_N$ be some constants. Define
\[
a_{ij}=\frac{1}{2}p_ip_je^{-|x_i-x_j|}, \qquad A^N_k=\sum_{j\neq k}a_{jk}+2\sum_{1\leq m<k<n\leq N}a_{mn},\quad i,j,k=1,2\cdots,N.
\]
Then, for $2\leq N\in\mathbb{N}$ we have
\begin{equation}\label{eq:peakonspeedrelation}
\sum_{j=1}^N(-1)^{j+1}A^N_j=0,
\end{equation}
and
\begin{equation}\label{eq:energy}
\sum_{1\leq i<j\leq N}a_{ij}(A^N_i-A^N_j)=0.
\end{equation}

\end{proposition}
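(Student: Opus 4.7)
The plan is to treat both identities as purely algebraic statements about how $A^N_k$ decomposes as a linear combination of the $a_{ij}$. The key structural observation is that for every $\alpha<\beta$, the coefficient of $a_{\alpha\beta}$ in $A^N_k$ is
\[
c^k_{\alpha\beta}:=\mathbf{1}[k\in\{\alpha,\beta\}]+2\,\mathbf{1}[\alpha<k<\beta],
\]
which depends only on the combinatorial position of $k$ relative to the pair $\{\alpha,\beta\}$ and not on the particular values of $p_i$ or $x_i$ (beyond their ordering). Both identities will follow by chasing this coefficient.

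For \eqref{eq:peakonspeedrelation} I would compute the coefficient of each $a_{\alpha\beta}$ in $\sum_{j=1}^{N}(-1)^{j+1}A^N_j$, which equals
\[
(-1)^{\alpha+1}+(-1)^{\beta+1}+2\sum_{k=\alpha+1}^{\beta-1}(-1)^{k+1}.
\]
When $\beta-\alpha$ is odd, the endpoint terms cancel and the inner alternating sum has an even number of terms that cancel pairwise. When $\beta-\alpha$ is even, the endpoint terms contribute $2(-1)^{\alpha+1}$ while the inner sum collapses by direct evaluation to $(-1)^\alpha$, and the two contributions annihilate. Since the coefficient vanishes for every $\alpha<\beta$, the identity follows.

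For \eqref{eq:energy} I would expand $A^N_i-A^N_j=\sum_{\alpha<\beta}(c^i_{\alpha\beta}-c^j_{\alpha\beta})\,a_{\alpha\beta}$, turning the left-hand side into a quadratic form in the $a_{\alpha\beta}$. The coefficient of $a_{ij}^2$ vanishes immediately because $c^i_{ij}=c^j_{ij}=1$. For distinct pairs $\{i<j\}\neq\{\alpha<\beta\}$, collecting both ways each pair can play the role of the outer index yields the symmetric coefficient
\[
(c^i_{\alpha\beta}-c^j_{\alpha\beta})+(c^\alpha_{ij}-c^\beta_{ij}),
\]
which I would verify vanishes by case analysis on the relative orderings of the four indices: the disjoint configurations (separated on one side, two nested orientations, two interleaved orientations), and the shared-index configurations ($i_1=i_2$, $j_1=j_2$, and the two chain cases $i_1<j_1=i_2<j_2$ and $i_2<j_2=i_1<j_1$). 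Each case reduces to a small integer identity among four indicator values.

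The main obstacle is this last case analysis for \eqref{eq:energy}: it is mechanical but requires careful bookkeeping of the indicator functions and sign conventions. If this becomes unwieldy, a clean backup is to introduce the partial sums $B_k=\sum_{j<k}p_je^{x_j}$, $C_k=\sum_{j>k}p_je^{-x_j}$ with $b_k=p_ke^{x_k}$, $c_k=p_ke^{-x_k}$, rewrite $2A^N_k=c_kB_k+b_kC_k+2B_kC_k$, and exploit the telescoping identity
\[
A^N_{k+1}-A^N_k=\tfrac12\bigl[C_{k+1}(b_k+b_{k+1})-B_k(c_k+c_{k+1})\bigr]
\]
together with the boundary conditions $B_1=C_N=0$; \eqref{eq:peakonspeedrelation} then becomes an alternating telescoping and \eqref{eq:energy} reduces to a summation-by-parts that collapses because of the two boundary zeros.
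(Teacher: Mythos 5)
Your proof is correct, but it takes a genuinely different route from the paper. You extract the coefficient $c^k_{\alpha\beta}=\mathbf{1}[k\in\{\alpha,\beta\}]+2\,\mathbf{1}[\alpha<k<\beta]$ of each $a_{\alpha\beta}$ in $A^N_k$ and verify both identities coefficient-by-coefficient: a direct parity computation for \eqref{eq:peakonspeedrelation}, and for \eqref{eq:energy} the vanishing of the symmetrized coefficient $(c^i_{\alpha\beta}-c^j_{\alpha\beta})+(c^\alpha_{ij}-c^\beta_{ij})$, which I have checked holds in all of your listed configurations (disjoint separated, both nestings, both interleavings, and the shared-index and chain cases), so the case analysis is exhaustive and the argument closes. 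The paper instead argues by induction on $N$: it proves \eqref{eq:peakonspeedrelation} via the recursion $A^{M+1}_k=A^M_k+a_{kN}+2\sum_{j<k}a_{jN}$, and for \eqref{eq:energy} it first rewrites the left-hand side as $\sum_k A^N_k\bigl(\sum_{j>k}a_{kj}-\sum_{i<k}a_{ik}\bigr)$ and then establishes, again by induction, the intermediate identity $\sum_{j>k}a_{kj}-\sum_{i<k}a_{ik}=\sum_{j=1}^{k-1}(-1)^jA^N_{k-j}+\sum_{j=1}^{N-k}(-1)^{j+1}A^N_{k+j}$, after which \eqref{eq:energy} follows from the antisymmetry of the resulting bilinear form in the $A^N_k$. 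Your approach is more elementary and self-contained (no double induction, and the mechanism of the cancellation is visible in the combinatorics of the four indices), at the cost of a longer case check; the paper's approach buys the structural identity relating the one-sided interaction sums to alternating sums of the speeds, which it reuses implicitly and which fits the telescoping flavor of your own suggested backup via partial sums.
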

\begin{proof}
We first prove that \eqref{eq:peakonspeedrelation} holds for $N=2$. Actually, we have
$
A^2_1-A^2_2=a_{21}-a_{12}=0.
$
Next, we use induction for $N$ to prove \eqref{eq:peakonspeedrelation}. Assume that it holds for all $N$ satisfying $2\leq N\leq M$. Then, for $N=M+1$, we have
\begin{align*}
A^N_k=A^M_k+a_{kN}+2\sum_{j<k}a_{jN},\quad k\leq M,
\end{align*}
which implies for any $i\leq M$ that
\begin{align}\label{eq:id1}
\sum_{j=1}^i(-1)^{j+1}A^N_j=\sum_{j=1}^i(-1)^{j+1}A^M_j+(-1)^{i+1}\sum_{j=1}^ia_{jN},
\end{align}
and
\begin{align}\label{eq:id2}
\sum_{j=1}^{N-i}(-1)^{j+1}A_{N-j+1}^N=\sum_{j=1}^{M-i}(-1)^{j}A_{M-j+1}^M+(-1)^{N-i+1}\sum_{j=1}^{i}a_{jN}.
\end{align}
Set $i=M$ in the identity \eqref{eq:id1}, and by the assumption for induction we obtain
\begin{align*}
\sum_{j=1}^M(-1)^{j+1}A^N_j=\sum_{j=1}^M(-1)^{j+1}A^M_j+(-1)^{M+1}\sum_{j=1}^Ma_{jN}=(-1)^{M+1}A_{N}^N.
\end{align*}
Therefore,
\[
\sum_{j=1}^{N}(-1)^{j+1}A^N_j=\sum_{j=1}^M(-1)^{j+1}A^N_j+(-1)^{M+2}A_{N}^N=(-1)^{M+1}A_{N}^N+(-1)^{M+2}A_{N}^N=0.
\]

Next, we prove \eqref{eq:energy}. Notice the following identity:
\begin{align}\label{eq:energy1}
\sum_{1\leq i<j\leq N}a_{ij}(A^N_i-A^N_j)=\sum_{k=1}^NA^N_k\left(\sum_{j>k}a_{kj}-\sum_{i<k}a_{ik}\right).
\end{align}
We claim:
\begin{align}\label{eq:energy2}
\sum_{j>k}a_{kj}-\sum_{i<k}a_{ik}=\sum_{j=1}^{k-1}(-1)^jA_{k-j}^N+\sum_{j=1}^{N-k}(-1)^{j+1}A^N_{k+j},\qquad k=1,2,\cdots,N.
\end{align}
Combining \eqref{eq:energy1} and \eqref{eq:energy2} gives \eqref{eq:energy}.

Next, we use induction to show the above claim \eqref{eq:energy2}. It obviously holds for $N=2$. Assume \eqref{eq:energy2} holds for $2\leq N\leq M$. Due to \eqref{eq:id1} and \eqref{eq:id2}, for $N=M+1$ and $k\leq M$, we have
\begin{equation}
\begin{aligned}
&\sum_{j=1}^{k-1}(-1)^jA_{k-j}^N=\sum_{j=1}^{k-1}(-1)^{k-j}A_{j}^N=\sum_{j=1}^{k-1}(-1)^{k-j}(-1)^{j+1}(-1)^{j+1}A_{j}^N\\
=&(-1)^{k+1}\sum_{j=1}^{k-1}(-1)^{j+1}A_{j}^N=(-1)^{k+1}\left(\sum_{j=1}^{k-1}(-1)^{j+1}A^M_j+(-1)^{k}\sum_{j=1}^{k-1}a_{jN}\right)\\
=&\sum_{j=1}^{k-1}(-1)^jA_{k-j}^M-\sum_{j=1}^{k-1}a_{jN},
\end{aligned}
\end{equation}
and
\begin{equation}
\begin{aligned}
&\sum_{j=1}^{N-k}(-1)^{j+1}A^N_{k+j}=\sum_{j=1}^{N-k}(-1)^{N-k-j}A^N_{N-j+1}\\
=&\sum_{j=1}^{N-k}(-1)^{N-k-j}(-1)^{j+1}(-1)^{j+1}A^N_{N-j+1}=(-1)^{N-k+1}\sum_{j=1}^{N-k} (-1)^{j+1}A^N_{N-j+1}\\
=&(-1)^{N-k+1}\left(\sum_{j=1}^{M-k}(-1)^{j}A_{M-j+1}^M+(-1)^{N-k+1}\sum_{j=1}^{k}a_{jN}\right)\\
=&\sum_{j=1}^{M-k}(-1)^{j+1}A_{k+j}^M+\sum_{j=1}^{k}a_{jN}.
\end{aligned}
\end{equation}
Combining the above two identity, we obtain
\begin{multline*}
\sum_{j=1}^{k-1}(-1)^jA_{k-j}^N+\sum_{j=1}^{N-k}(-1)^{j+1}A^N_{k+j}=\sum_{j=1}^{k-1}(-1)^jA_{k-j}^M+\sum_{j=1}^{M-k}(-1)^{j+1}A_{k+j}^M+a_{kN}\\
=\sum_{M\geq j>k}a_{kj}-\sum_{i<k}a_{ik}+a_{kN}=\sum_{N\geq j>k}a_{kj}-\sum_{i<k}a_{ik}.
\end{multline*}

\end{proof}

\subsection{Conservative sticky $N$-peakon solutions}\label{sec:sticky}
As mentioned in \cite{chang2016lax,chang2017liouville,chang2018lax}, $N$-peakon solutions to the mCH equation can be constructed via system \eqref{eq:ODE} only before the collision between peakon trajectories. However, trajectories given by \eqref{eq:ODE} can collide with each other in finite time; see the following numerical results (by the standard Runge-Kutta methods for ODEs).
\begin{figure}[H]
\begin{center}
\includegraphics[width=0.7\textwidth]{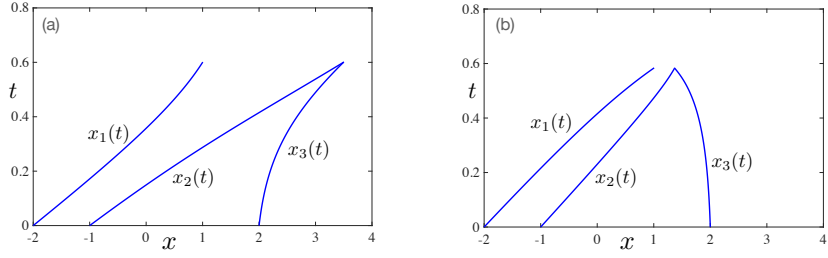}
\end{center}
\caption{Trajectories obtained by ODE system \eqref{eq:ODE} with momentum: (a) $p_1=15,~p_2=2,~p_3=3$;  (b) $p_1=5,~p_2=5,~p_3=-1$.}
\label{fig:threetrajectories1}
\end{figure}
Next, we are going to introduce the sticky particle method to overcome the difficulty brought by collision. This method was first used in \cite{GaoLiu} for system \eqref{eq:ODE1}, where the energy of the sticky peakons is not conserved. Here we will apply the sticky particle method for \eqref{eq:ODE} to obtain global energy conservative sticky peakons. 

For $N\in\mathbb{N}$, consider the following initial datum:
\begin{align}\label{eq:m0N}
m_0^N(x)=\sum_{i=1}^Np_i\delta(x-c_i),\quad c_1<c_2<\cdots<c_N,\quad \sum_{i=1}^N|p_i|\leq M_0.
\end{align}
Here, $p_i$, $c_i$ and $M_0$ are constants.
We have the following lemma:
\begin{lemma}\label{lmm:Npeakonlemmano collision}
Let $m^N_0$ given by \eqref{eq:m0N}. Then, the following statements hold:
\begin{enumerate}
\item [(i)] There is a unique global solution $\{x_i(t)\}_{i=1}^N$ to \eqref{eq:ODE} subject to $\{x_i(0)=c_i\}_{i=1}^N$.
\item [(ii)] Assume that the first  collision time of the  solutions  is $t_1\in \mathbb{R}_+\cup \{+\infty\}$ which means
\[
t_1:=\sup\{t_0:x_1(t)<x_2(t)<\ldots<x_{N}(t) ~\textrm{ for }~t\in [0,t_0)\}.
\]
Then, we have
\begin{align}\label{equilcontinuous}
\left|\frac{\di}{\di t}x_i(t)\right|\leq \frac{1}{2}M_0^2, \quad t\in [0,t_1), \quad i=1,\ldots, N.
\end{align}
\item [(iii)] There is a weak solution to the mCH equation in the sense of Definition \ref{def:weaksolution} subject to initial data $m^N_0$ in $[0,t_1)$, which is given by
\begin{align}\label{eq:Npeakonsoluiton}
u^N(x,t):=\sum_{i=1}^Np_iG(x-x_i(t)),\quad t\in[0,t_1).
\end{align}
\item [(iv)] The energy $\mathcal{H}$ of $u^N$ (defined by \eqref{eq:energy11}) is conserved for $t\in [0,t_1]$.
\end{enumerate}
\end{lemma}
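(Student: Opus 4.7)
I will treat the four items in order: (ii) is a direct a priori bound, (i) follows from (ii) via classical ODE theory, (iii) requires careful piecewise integration by parts with bookkeeping of the jumps of $u_x^N$, and (iv) reduces to the algebraic identity \eqref{eq:energy} of Proposition~\ref{pro:energyconservation}.

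\textbf{Parts (i) and (ii).} On the open cone $\Omega_N := \{x_1<\cdots<x_N\}$ every exponential in \eqref{eq:ODE} is bounded by $1$, so
\[
\left|\frac{\di}{\di t}x_i\right| \le \tfrac{1}{2}|p_i|\sum_{j\ne i}|p_j| + \sum_{1\le m<i<n\le N}|p_m||p_n| \le \tfrac{1}{2}\Big(\sum_{j=1}^N |p_j|\Big)^2 \le \tfrac{1}{2}M_0^2,
\]
which is exactly \eqref{equilcontinuous}. Since the right-hand side of \eqref{eq:ODE} is real-analytic on $\Omega_N$, Cauchy-Lipschitz yields unique local existence starting from $(c_1,\ldots,c_N)$; the uniform bound above forbids blow-up in finite time, so the trajectory can leave $\Omega_N$ only through a peakon collision, giving unique existence on $[0,t_1)$. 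Extension past $t_1$ (proving the full global claim in (i)) is done by the sticky rule that will be introduced shortly.

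\textbf{Part (iii).} I substitute $u^N=\sum_i p_i G(\cdot-x_i(t))$ into \eqref{def:weak2}. Using $u^N-u^N_{xx}=m^N=\sum_i p_i\delta_{x_i(t)}$ and integration by parts in time,
\[
\int_0^T\!\!\int_{\mathbb R} u^N(\varphi_t-\varphi_{txx})\,\di x\,\di t = -\sum_i p_i\varphi(c_i,0) - \sum_i\int_0^T p_i\varphi_x(x_i(t),t)\,\dot x_i(t)\,\di t,
\]
and the first piece cancels $\int\varphi(x,0)\,m_0(\di x)$. For the remaining terms of $\mathcal L(u^N,\varphi)$, I would partition $\mathbb R$ by the ordered trajectories, integrate by parts $-\tfrac13\int u^3\varphi_{xxx}$ and $\int(u^3+2uu_x^2)\varphi_x$ on each smooth strip, and collect the jump contributions at each $x=x_i(t)$ (here the Vol'pert-type chain rule is what justifies identifying $u^3_x$ and $(u_x^2)_x$ as $\overline{u_x^2}$ times $u_{xx}$). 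After bulk cancellation the net remainder is
\[
\sum_i\int_0^T p_i\varphi_x(x_i,t)\big[\,u^N(x_i,t)^2 - \overline{(u^N_x)^2}(x_i,t)\,\big]\di t,
\]
which matches $\sum_i\int_0^T p_i\varphi_x(x_i,t)\dot x_i(t)\,\di t$ as soon as one checks the pointwise identity
\[
u^N(x_i,t)^2-\overline{(u^N_x)^2}(x_i,t)=\dot x_i(t),
\]
and this identity follows from a direct algebraic split of $u^N(x_i)$ and $u^N_x(x_i^\pm)$, which turns it into \eqref{eq:ODE}.

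\textbf{Part (iv) and main obstacle.} The elementary identity $\int_{\mathbb R}\big(G(x-a)G(x-b)+G'(x-a)G'(x-b)\big)\di x=\tfrac{1}{2}e^{-|a-b|}$ gives
\[
\mathcal H(u^N)(t)=\tfrac{1}{2}\sum_{i=1}^N p_i^2+2\sum_{1\le i<j\le N}a_{ij}(t),\qquad a_{ij}(t)=\tfrac{1}{2}p_ip_je^{-|x_i(t)-x_j(t)|}.
\]
A direct reading of \eqref{eq:ODE} shows $\dot x_k=A_k^N$, so on $[0,t_1)$,
\[
\frac{\di}{\di t}\mathcal H(u^N)=2\sum_{i<j}\dot a_{ij}=2\sum_{i<j}a_{ij}(\dot x_i-\dot x_j)=2\sum_{i<j}a_{ij}(A_i^N-A_j^N)=0
\]
by identity \eqref{eq:energy}, and continuity of $\mathcal H(u^N)$ at $t=t_1$ (thanks to the uniform bound in (ii)) extends conservation to the closed interval. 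The genuinely delicate point in the whole argument is the jump bookkeeping in (iii): one must verify that the singular product $\overline{u_x^2}\,m$ in \eqref{eq:weakfunctional} combines with the bulk $u^3$ and $uu_x^2$ terms to produce precisely $u^2-\overline{u_x^2}$ as the transport velocity at each peak; everything else is either the a priori bound of (ii) or an algebraic consequence of Proposition~\ref{pro:energyconservation}.
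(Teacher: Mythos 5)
Your proposal is correct and follows essentially the same route as the paper: the same a priori bound for (ii), the same piecewise integration by parts for (iii) reducing everything to the velocity identity $\dot x_i=(u^N)^2(x_i,t)-\overline{(u_x^N)^2}(x_i,t)$ given by \eqref{eq:ODE}, and the same reduction of (iv) to identity \eqref{eq:energy} via $\mathcal H(u^N)=\sum_{i,j}p_ip_jG(x_i-x_j)$ and $\dot a_{ij}=a_{ij}(A_i^N-A_j^N)$. The only difference is that the paper carries out the jump bookkeeping in (iii) explicitly (computing the three integrals $I_1,I_2,I_3$ separately and summing them to $\sum_ip_i\int_0^{t_1}\frac{\di}{\di t}\varphi(x_i(t),t)\di t$), whereas you leave that computation as a described plan; the plan is the right one.
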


\begin{proof}
The statement (i) is obvious and we only prove (ii), (iii) and (iv).

\textbf{Step 1}.   Proof of (ii).
By the definition of $t_1$, we know $x_1(t)<x_2(t)<\ldots<x_N(t)$ when $t\in[0,t_1)$.  Hence, by system \eqref{eq:ODE} we obtain
\begin{align*}
\left|\frac{\di}{\di t}x_i(t)\right|&\leq \frac{1}{2}\sum_{j<i}|p_i||p_j|+\frac{1}{2}\sum_{j>i}|p_i||p_j|+\sum_{1\leq m<i<n\leq N}|p_m||p_n|\\
&\leq \frac{1}{2}\left(\sum_{i=1}^N|p_i|\right)^2\leq \frac{1}{2}M_0^2
\end{align*}

\textbf{Step 2}. Proof of (iii). Obviously, we have
\[
u^N\in C([0,t_1);H^1(\mathbb{R}))\cap L^\infty(0,t_1; W^{1,\infty}(\mathbb{R}))\cap W^{1,\infty}(0,t_1;L^\infty(\mathbb{R})).
\]
In the following proof we denote $u=u^N$ and $m=m^N$. For any   test function $\varphi\in C_c^\infty(\mathbb{R}\times[0,t_1))$,  let
\begin{multline}\label{eq:weakfunctional2}
\mathcal{L}(u,\varphi)=\int_0^{t_1}\int_{\mathbb{R}}u(\varphi_t-\varphi_{txx})\di x\di t+\int_0^{t_1}\int_{\mathbb{R}}(u^3+2uu_x^2)\varphi_x-\frac{1}{3}u^3\varphi_{xxx}\di x\di t\\
-\int_0^{t_1}\int_{\mathbb{R}}\varphi_{x}\overline{u_x^2} m(\di x\di t)=:I_1+I_2+I_3.
\end{multline}
Denote $x_0:=-\infty$, $x_{N+1}:=+\infty$ and $p_0=p_{N+1}=0$. Notice that $m(x,t)=0$ for $x\in(x_i,x_{i+1})$ ($i=0,\cdots,N$) and $u_x(x_i-)-u_x(x_i+)=p_i$ ($i=1,\cdots,N$).
By integration by parts for space variable $x$, we calculate $I_1$ as
\begin{equation}\label{eq:I11}
\begin{aligned}
I_1&=\int_0^{t_1}\int_{\mathbb{R}}u(\varphi_t-\varphi_{txx})\di x\di t\\
&=\int_0^{t_1}\sum_{i=0}^{N}\int_{x_i}^{x_{i+1}}u(\varphi_t-\varphi_{txx})\di x\di t=\int_0^{t_1}\sum_{i=0}^{N}\int_{x_i}^{x_{i+1}}u\varphi_t+u_x\varphi_{tx}\di x\di t\\
&=\int_0^{t_1}\sum_{i=0}^{N}\int_{x_i}^{x_{i+1}}m\varphi_t\di x\di t+\int_0^{t_1}\sum_{i=0}^{N}\Big[(u_x\varphi_{t})(x_{i+1}-,t)-(u_x\varphi_{t})(x_{i}+,t)\Big]\di t\\
&=\int_0^{t_1}\sum_{i=1}^Np_i\varphi_t(x_i(t),t)\di t.
\end{aligned}
\end{equation}
Similarly for $I_2$, we have
\begin{equation}\label{eq:I21}
\begin{aligned}
I_2&=\int_0^{t_1}\int_{\mathbb{R}}(u^3+2uu_x^2)\varphi_x-\frac{1}{3}u^3\varphi_{xxx}\di x\di t=\int_0^{t_1}\int_{\mathbb{R}}(u^3+2uu_x^2)\varphi_x+u^2u_x\varphi_{xx}\di x\di t\\
&=\int_0^{t_1}\sum_{i=0}^N\int_{x_i}^{x_{i+1}}mu^2\varphi_x\di x\di t+\int_0^{t_1}\sum_{i=0}^N \Big[(u^2u_x\varphi_x)(x_{i+1}-)-(u^2u_x\varphi_x)(x_{i}+)\Big]\di t\\
&=\int_0^{t_1}\sum_{i=1}^N p_iu^2(x_i(t),t)\varphi_x(x_i(t),t)\di t.
\end{aligned}
\end{equation}
For $I_3$ in \eqref{eq:weakfunctional2}, we have
\begin{equation}\label{eq:I31}
I_3=-\int_0^{t_1}\sum_{i=1}^N p_i\overline{u_x^2}(x_i(t),t)\varphi_x(x_i(t),t)\di t.
\end{equation}
Combining  \eqref{eq:ODE}, \eqref{eq:weakfunctional2}, \eqref{eq:I11}, \eqref{eq:I21} and \eqref{eq:I31} gives
\begin{align}\label{useful}
\mathcal{L}(u,\varphi)=\sum_{i=1}^Np_i\int_0^{t_1}\frac{\di}{\di t}\varphi(x_i(t),t)\di t=-\int_{\mathbb{R}}\varphi(x,0) m_0^N(\di x).
\end{align}
Hence, $u^N$ defined by \eqref{eq:Npeakonsoluiton} is a weak solution in the sense of Definition \ref{def:weaksolution}.

\textbf{Step 3}.   Proof of (iv).  First, we have
\begin{equation}
\begin{aligned}
\mathcal{H}(u^N)=\int_{\mathbb{R}}(u^N)^2+(u_x^N)^2\di x&=\int_{\mathbb{R}}m^Nu^N\di x\\
&=\sum_{i=1}^Np_iu^N(x_i,t)=\sum_{i,j=1}^Np_ip_jG(x_i-x_j).
\end{aligned}
\end{equation}
Let $a_{ij}$ and $A^N_k$ be defined as in Proposition \ref{pro:energyconservation}. Then, we have $\frac{\di}{\di t}x_i(t)=A_i^N$, $i=1,\cdots, N$.  Therefore, for $i<j$,
\[
\frac{\di}{\di t}a_{ij}=a_{ij}(A_i^N-A_j^N).
\]
By \eqref{eq:energy} we obtain
\[
\frac{\di}{\di t}\mathcal{H}(u^N)=\frac{\di}{\di t}\sum_{1\leq i<j\leq N}2a_{ij}=2\sum_{1\leq i<j\leq N}a_{ij}(A_i^N-A_j^N)=0.
\]

\end{proof}

In Lemma \ref{lmm:Npeakonlemmano collision}, $u^N$ defined by \eqref{eq:Npeakonsoluiton} is a global weak solution when $t_1=+\infty$. However, when collision of $x_i(t)$ happens, we have $t_1<+\infty$. Next, we introduce the sticky particle method to extend the trajectories of $N$-peakon solutions globally.
The main idea for sticky particle method is very simple: whenever the trajectories collide with each other, we assume they stick together. Then, we are going to use the following four steps to extend the solution $x_i$ and $u^N$ when $t_1<+\infty$. Denote  $I:=\{1,\ldots,N\}$.

\begin{enumerate}
\item [\textbf{Sticky 1.}] Sticky amplitudes $q_k$ and  index $i_k$ after collision:\\
For $1\leq i\leq N$, denote the collection of the indices of $x_j$ coinciding with $x_i$ at time $t_1$
\begin{align}\label{collection}
J_i:=\big\{j:x_j(t_1)=x_i(t_1)\big\}.
\end{align}
We pick up the collection of minimal indices from $J_i$
\begin{align}\label{minimalindices}
\overline{I}:=\{\min J_i:i=1,\ldots,N\}=\{i_1<\ldots<i_{N_1}\}\subsetneqq I.
\end{align}
Define
\begin{align}\label{singlepeakon}
q_k:=\sum_{j\in  J_{i_k}}p_j,\quad 1\leq k\leq N_1.
\end{align}
Each set $J_{i_k}$ corresponds to a single peakon $q_k\delta(x-x_{i_k}(t_1))$.
$\{J_{i_k}\}_{k=1}^{N_1}$ is a partition of $I$, and we have
\begin{align}\label{summation}
\sum_{k=1}^{N_1}q_k=\sum_{i=1}^Np_i~\textrm{ and }~\sum_{k=1}^{N_1}|q_k|\leq \sum_{i=1}^N|p_i|\leq M_0.
\end{align}

\item [\textbf{Sticky 2.}]  New initial data $m_1$ and $y^0_k$:\\
Set
$$y_{k}^0:=x_{i_k}(t_1)~\textrm{ for }~1\leq k\leq N_1$$
and
$$m_1(x):=\sum_{k=1}^{N_1}q_k\delta(x-y^0_k).$$
By the definition of $q_k$  we know
\begin{align}\label{connectiontwotime}
m_1(x)=\sum_{k=1}^{N_1}q_k\delta(x-y^0_k)=\sum_{i=1}^Np_i\delta(x-x_i(t_1)).
\end{align}

\item [\textbf{Sticky 3.}]  $N_1$ peakon solution between two collision time $t_1$ and $t_2$:\\
Consider the system \eqref{eq:ODE} with $N_1$ particles $(1\leq k\leq N_1)$ (view $t_1$ as initial time)
\begin{equation}\label{eq:ODE2}
\left\{
\begin{aligned}
&\frac{\di}{\di t}y_k=\frac{1}{2}\sum_{j<k}q_kq_je^{y_j-y_k}+\frac{1}{2}\sum_{j>k}q_kq_je^{y_k-y_j}+\sum_{m<k<n}q_mq_ne^{y_m-y_n},\\
&y_{k}(t_1)=y_{k}^0.
\end{aligned}
\right.
\end{equation}
There exists a unique solution $\{y_k(t)\}_{k=1}^{N_1}$ to \eqref{eq:ODE2}. Because $x_{i_1}(t_1)<x_{i_2}(t_1)<\ldots<x_{i_{N_1}}(t_1)$,  we know $y_{1}^0<y_{2}^0<\ldots<y_{N_1}^0$.
Set
\[
t_2:=\sup\{t_0:y_1(t)<y_2(t)<\ldots<y_{N_1}(t) ~\textrm{ for }~t\in [t_1,t_0)\},
\]
and obviously $t_2>t_1$. Then similarly to \eqref{equilcontinuous},  we obtain from  \eqref{summation} that
\begin{align}\label{eq:equalocn}
\left|\frac{\di}{\di t}y_k(t)\right|\leq \frac{1}{2}M_0^2,\quad t\in [t_1,t_2), \quad i=1,\ldots,N
\end{align}
and
\begin{align}\label{defv}
v^{N_1}(x,t):=\sum_{k=1}^{N_1}q_kG(x-y_k(t)),\quad t\in[t_1,t_2)
\end{align}
is a weak solution to the mCH equation subject to initial data
$$(1-\partial_{xx})v^{N_1}(x,t_1)=m_1(x)=\sum_{i=1}^Np_i\delta(x-x_i(t_1)).$$

\item [\textbf{Sticky 4.}]  Extend in time for $x_i$ and $u^N$:\\
Extend $x_i$ in time by
\begin{align}\label{extendxi}
x_{i}(t)=y_k(t)~\textrm{ for }~i\in J_{i_k}~\textrm{ and }~t\in [t_1,t_2).
\end{align}
Because $\{J_{i_k}\}_{k=1}^{N_1}$ is a partition of $I$, all the trajectories $x_i$ ($1\leq i\leq N$) have been extended and $x_i(t)$ stick together as one trajectory $y_k(t)$ when $i\in J_{i_k}$.\\
Extend $u^N$ in time by
\begin{align}\label{extenduN}
u^N(x,t)=\sum_{i=1}^Np_iG(x-x_i(t))~\textrm{ for }~t\in [t_1,t_2).
\end{align}
Combining \eqref{summation}, \eqref{defv}, \eqref{extendxi} and \eqref{extenduN}, we know
\begin{align}\label{eq:equal}
u^N(x,t)=v^{N_1}(x,t)~\textrm{ for }~t\in [t_1,t_2).
\end{align}
Hence, $u^N(x,t)$ is a weak solution to the mCH equation  when $t\in [t_1,t_2)$.

\end{enumerate}

Next, we prove that $u^N$ defined by \eqref{eq:Npeakonsoluiton} and \eqref{extenduN}  is a weak solution in $[0,t_2).$ For any test function $\varphi\in C_c^\infty(\mathbb{R}\times[0,t_2))$,   by using \eqref{useful} and \eqref{connectiontwotime} we have
\begin{align*}
\mathcal{L}(u^N,\varphi)&=\sum_{i=1}^Np_i\int_0^{t_1}\frac{\di}{\di t}\varphi(x_i(t),t)\di t+\sum_{k=1}^{N_1}q_k\int_{t_1}^{t_2}\frac{\di}{\di t}\varphi(y_{k}(t),t)\di t\\
&=\sum_{i=1}^Np_i\varphi(x_i(t_1),t_1)-\sum_{i=1}^Np_i\varphi(c_i,0)-\sum_{k=1}^{N_1}q_k\varphi(x_{i_k}(t_1),t_1)\\
&=-\sum_{i=1}^Np_i\varphi(c_i,0)=-\int_{\mathbb{R}}\varphi(x,0)m_0^N(\di x),
\end{align*}
which means $u^N(x,t)$ is a weak solution to the mCH equation subjcet to initial data $m^N_0$ (given by \eqref{eq:m0N}) when $t\in [0,t_2)$.
Moreover, by \eqref{equilcontinuous} and \eqref{eq:equalocn} we know
\[
\left|\frac{\di}{\di t}x_i(t)\right|\leq \frac{1}{2}M_0^2,\quad t\in[0,t_2),\quad i=1,\ldots,N.
\]

If $t_2=+\infty$, then we obtain a global weak solution to the mCH equation. If $t_2<+\infty$, then we can repeat the above process to extend trajcetories $x_i$ and weak solution $u^N$ in time. By the sticky assumption, collision between $x_i$ can  happen at most $N-1$ times and we can extend $x_i$ and $u^N$ globally.  Moreover, at each time interval $x_i$ is unique. Hence, the sticky weak solution constructed by the above method is unique.
We have the following theorem:
\begin{theorem}\label{thm:sticky}
Let $m^N_0$ given by  \eqref{eq:m0N}. Then, there are unique global sticky trajectories $\{x_i(t)\}_{i=1}^N$ (defined similarly by \eqref{eq:ODE2} and \eqref{extendxi} at each collision time interval) with  initial data $\{x_i(0)=c_i\}_{i=1}^N$ and the following estimate holds
\begin{align}\label{eq:Lipschitz}
\left|\frac{\di}{\di t}x_i(t)\right|\leq \frac{1}{2}M_0^2,\quad t\geq0,\quad i=1,\ldots, N.
\end{align}
There is a  global sticky peakon weak solution to the mCH equation with initial data $m^N_0$ and it is given by
\begin{align}\label{approximationsolution}
u^N(x,t)=\sum_{i=1}^Np_iG(x-x_i(t))~\textrm{ and }~m^N(x,t)=\sum_{i=1}^Np_i\delta(x-x_i(t)).
\end{align}
Moreover, the energy $\mathcal{H}(u^N)$ is conserved.

\end{theorem}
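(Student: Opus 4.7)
The strategy is an induction on the number of collisions, bootstrapping from Lemma \ref{lmm:Npeakonlemmano collision} on each inter-collision interval and invoking the sticky merging rule (Sticky 1--4 above) at the collision times. First I would apply Lemma \ref{lmm:Npeakonlemmano collision} directly to obtain unique trajectories on $[0,t_1)$, the Lipschitz bound $|\dot x_i|\leq \tfrac12 M_0^2$, and weak solution / energy conservation on $[0,t_1]$ (noting that $\mathcal{H}(u^N)(t)=\sum_{i,j}p_ip_jG(x_i(t)-x_j(t))$ is continuous in $t$ because every $x_i$ is, so the constant value on $[0,t_1)$ extends to the endpoint).

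At the first collision time $t_1$ I would implement the partition rule \eqref{collection}--\eqref{singlepeakon} to obtain new amplitudes $\{q_k\}_{k=1}^{N_1}$ with $N_1<N$ and initial positions $\{y_k^0\}$, and re-apply Lemma \ref{lmm:Npeakonlemmano collision} to this $N_1$-peakon datum on $[t_1,t_2)$. The key quantitative point is that the total mass bound $\sum_k |q_k|\leq \sum_i |p_i|\leq M_0$ is preserved under the merge, so the Lipschitz estimate \eqref{eq:equalocn} for the new particles matches \eqref{equilcontinuous} with the \emph{same} constant $M_0$. Because $\{J_{i_k}\}_{k=1}^{N_1}$ is a partition of $I$, defining $x_i(t):=y_k(t)$ for $i\in J_{i_k}$ in \eqref{extendxi} extends every $x_i$ continuously across $t_1$. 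Iterating, each collision decreases the number of distinct trajectories by at least one, so at most $N-1$ collisions occur and the procedure produces globally defined $\{x_i(t)\}_{i=1}^N$ on $[0,\infty)$ satisfying \eqref{eq:Lipschitz}.

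To verify the weak-solution identity \eqref{def:weak2} globally, I would take $\varphi\in C_c^\infty(\mathbb{R}\times[0,\infty))$, split the time integral at the consecutive collision times $0=t_0<t_1<\dots<t_L$, apply the per-interval identity \eqref{useful} on each sub-interval (with the momenta being $\{p_i\}$ on $[0,t_1)$ and the appropriate merged amplitudes $\{q_k^{(j)}\}$ on $[t_j,t_{j+1})$), and telescope. At each interior collision time $t_j$ the boundary contribution from the left interval is $\sum_i \tilde p_i\, \varphi(\tilde x_i(t_j),t_j)$ where $\tilde p_i,\tilde x_i$ are the pre-merge amplitudes and positions; this exactly cancels the boundary contribution from the right interval because $\sum_{i\in J_{i_k}}\tilde p_i=q_k^{(j)}$ and $\tilde x_i(t_j)=y_k^{0,(j)}$ for $i\in J_{i_k}$. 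The only surviving boundary term is $-\int \varphi(x,0)\,m_0^N(\di x)$, giving \eqref{def:weak2}. Energy conservation follows similarly: Lemma \ref{lmm:Npeakonlemmano collision}(iv) gives constancy on each sub-interval, and the same partition identity used for the boundary cancellation shows
\begin{equation*}
\sum_{i,j=1}^N p_ip_j G(\tilde x_i(t_j)-\tilde x_j(t_j))=\sum_{k,l=1}^{N_j}q_k^{(j)}q_l^{(j)}G(y_k^{0,(j)}-y_l^{0,(j)}),
\end{equation*}
so $\mathcal{H}(u^N)$ is continuous across each $t_j$ and therefore constant on $[0,\infty)$.

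The main obstacle, and the one place where the algebraic structure of system \eqref{eq:ODE} is really used, is already hidden inside Lemma \ref{lmm:Npeakonlemmano collision}(iv): the non-trivial identity \eqref{eq:energy} of Proposition \ref{pro:energyconservation} is what makes the per-interval energy conservation possible despite the off-diagonal coupling of the vector field. Once that is granted, the global proof is essentially a bookkeeping argument — checking that (i) the merged total momentum controls the Lipschitz constant uniformly, (ii) boundary contributions of $\mathcal{L}(u^N,\varphi)$ at collision times cancel via the partition $\{J_{i_k}\}$, and (iii) the energy is continuous in positions. Uniqueness is then immediate from the uniqueness of ODE solutions on each inter-collision interval combined with the deterministic sticky merging rule at each $t_j$.
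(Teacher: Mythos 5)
Your proposal is correct and follows essentially the same route as the paper: the paper's construction (steps Sticky 1--4 together with the telescoping computation of $\mathcal{L}(u^N,\varphi)$ across collision times) is exactly your bookkeeping argument, and the paper's proof of the theorem itself reduces, as yours does, to the continuity of $\mathcal{H}(u^N)$ at each collision time via the partition identity $\sum_{i,j}p_ip_jG(x_i(t_1)-x_j(t_1))=\sum_{k,\ell}q_kq_\ell G(y_k^0-y_\ell^0)$. No gaps.
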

\begin{proof}
We use the notations in steps Sticky 1 to Sticky 4. We only need to show the continuity of $\mathcal{H}(u^N)$ at the first collision time $t_1$. For $t\in[0,t_1)$, by Lemma \ref{lmm:Npeakonlemmano collision} we have
\begin{align*}
\mathcal{H}(u^N)(t)=\sum_{i,j=1}^Np_ip_jG(x_i(t)-x_j(t))\equiv \mathcal{H}(u^N)(0).
\end{align*}
Since $x_i(t)$, $i=1,\cdots,N$, are continuous, we have
\begin{multline*}
\mathcal{H}(u^N)(0)=\mathcal{H}(u^N)(t_1-)=\sum_{i,j=1}^Np_ip_jG(x_i(t_1)-x_j(t_1))\\
=\sum_{i=1}^N\sum_{\ell=1}^{N_1}p_iq_\ell G(x_i(t_1)-y_\ell^0)=\sum_{k,\ell=1}^{N_1}q_kq_\ell G(y_k^0-y_{\ell}^0)=\mathcal{H}(v^{N_1})(t_1).
\end{multline*}
By the construction of sticky $N$-peakon, we have
\begin{align*}
\mathcal{H}(u^N)(t)=\mathcal{H}(v^{N_1})(t),\quad t\in[t_1,t_2).
\end{align*}
Using Lemma \ref{lmm:Npeakonlemmano collision} again, we know
\[
\mathcal{H}(v^{N_1})(t)\equiv \mathcal{H}(v^{N_1})(t_1),\quad t\in[t_1,t_2),
\]
and hence
\begin{align*}
\mathcal{H}(u^N)(t)=\mathcal{H}(v^{N_1})(t)=\mathcal{H}(v^{N_1})(t_1)=\mathcal{H}(u^N)(0),\quad t\in[t_1,t_2),
\end{align*}
which means
\[
\mathcal{H}(u^N)(t)\equiv\mathcal{H}(u^N)(0),\quad t\in[0,t_2).
\]

\end{proof}

\section{A selection principle for the uniqueness}\label{sec:dispersion}
In this section, we will provide a dispersive regularization as a selection principle for the uniqueness of the conservative $N$-peakon solutions. Let $\{x_i(t)\}_{i=1}^N$ be the global sticky trajectories constructed in Section \ref{sec:sticky}, and $u^N(x,t)=\sum_{i=1}^Np_iG(x-x_i(t))$. As mentioned in Introduction, when the peakons are well separated, i.e., $x_1(t)<\cdots<x_N(t)$, the vector field of \eqref{eq:ODE} is the same as $[(u^N)^2-\overline{(u_x^N)^2}](x_i(t),t)$. 
Consider compactly supported even mollifiers $\rho_\e\geq0$ ($\e>0$) with compact support in $(-\e,\e)$, $\|\rho_\e\|_{L^1}=1$, and increasing in $(-\e,0)$; see, e.g., \cite[Chapter 4]{Brezis}.
For a piecewise continuous function $f:\mathbb{R}\to\mathbb{R}$ with a jump discontinuity at $x_0$, it holds 
\begin{align}\label{eq:fact}
\overline{f}(x_0)=\lim_{\e\to0}(\rho_\e\ast f)(x_0).
\end{align}
See Appendix \ref{fact} for a proof of \eqref{eq:fact}.
This motivates the following regularization method for the $N$-peakon solutions.
Fix an integer $N>0$. Give an initial datum as \eqref{eq:m0N} for some constants $p_i$, $c_i$ ($1\leq i\leq N$) and  $M_0$. For any $N$ particles $\{x_k\}_{k=1}^N\subset\mathbb{R}$, define 
\begin{align*}
u^{N}(x;\{x_k\}_{k=1}^N):=\sum_{k=1}^Np_kG(x-x_k),\quad U^N(x;\{x_k\}_{k=1}^N):=\left[(u^{N})^2-(\partial_xu^{N})^2\right](x;\{x_k\}_{k=1}^N),
\end{align*}
and
\begin{align*}
U^{N,\e}(x;\{x_k\}_{k=1}^N):=(\rho_\e\ast U^N)(x;\{x_k\}_{k=1}^N).
\end{align*}
The system of ODEs for dispersive regularization is given by
\begin{align}\label{approximateODE}
\frac{\di}{\di t}x_i^\e(t)=U^{N,\e}(x^\e_i(t);\{x_k^\e(t)\}_{k=1}^N),\quad i=1,\cdots,N,
\end{align}
with initial data $x_i^\e(0)=c_i$ given in \eqref{eq:m0N}.  
Because $U^{N,\e}$ is  Lipschitz continuous and bounded, the existence and uniqueness of a global solution $\{x_i^\e(t)\}_{i=1}^N$ to this system of ODEs follow from  standard ODE theories. By using the solution $\{x_i^\e(t)\}_{i=1}^N$, we set
\begin{align*}
u^{N,\e}(x,t):=u^{N,\e}(x;\{x_k^\e(t)\}_{k=1}^N).
\end{align*}
Set
\begin{align}\label{eq:UNepisilon}
U_\e^N(x,t):=U^N(x;\{x_k^\e(t)\}_{k=1}^N),\quad	U^{N,\e}(x,t):=U^{N,\e}(x;\{x_k^\e(t)\}_{k=1}^N).
\end{align}
Therefore,  $U^{N,\e}(x,t)=(\rho_\e\ast U^N_\e)(x,t)$ and \eqref{approximateODE}  can be rewritten as
\begin{align}\label{eq:approximateODE}
\frac{\di }{\di t}x_i^\e(t)=U^{N,\e}(x^\e_i(t),t),\quad i=1,\cdots,N.
\end{align}

\begin{remark}
In \cite{gao2018dispersive}, a double mollification method was introduced for system \eqref{eq:ODE1}, where the mollified vector field was given by
\[
\tilde{U}^{N,\e}(x;\{x_k\}_{k=1}^N)=\rho_\e\ast\left[(\rho_\e\ast u^N)^2-(\rho_\e\ast u_x^N)^2\right].
\]
Since $G$ is continuous, we have $\lim_{\e\to0}\rho_\e\ast[(\rho_\e\ast G)^2](0)=G^2(0)=\frac{1}{4}$. Moreover, for this double mollification,  one can prove  $\lim_{\e\to0}\rho_\e\ast[(\rho_\e\ast G_x)^2](0)=\frac{1}{12}$; see \cite[Lemma 2.1]{gao2018dispersive}. Therefore
\[
\lim_{\e\to0}\rho_\e\ast[(\rho_\e\ast G)^2-(\rho_\e\ast G_x)^2](0)=\frac{1}{4}-\frac{1}{12}=\frac{1}{6},
\]
which is compatible with the traveling speed of one peakon solution given by \eqref{eq:ODE1}. Both the sticky particle methods in \cite{GaoLiu} and the limit of the above double mollification method will generate global peakon solutions corresponding to system \eqref{eq:ODE1}. However these two kind of solutions may be different; see \cite[Section 3.3]{gao2018dispersive}.

For the one time mollification in \eqref{eq:approximateODE}, one can show that
\[
\lim_{\e\to0}\rho_\e\ast(G^2-G_x^2)(0)=0.
\]
which is compatible with the traveling speed of one peakon solution given by \eqref{eq:ODE}. Simulation results show that the approximate trajectories given by \eqref{eq:approximateODE} converge to  the sticky trajectories in Theorem \ref{thm:sticky} (see Figure \ref{fig:dispersivelimit} and Figure \ref{fig:Four} below), which is different with the double mollification method for system \eqref{eq:ODE1}. Indeed. In Theorem \ref{thm:dispersionlimit}, we will prove the convergence rigorously before collision time. We will also give a rigorous proof in Proposition \ref{pro:dispersionlimit} for three peakons ($N=3$) that the limit of the trajectories given by \eqref{eq:approximateODE} is exactly the sticky trajectories even after the collision time. 

\end{remark}
\begin{figure}[H]
\begin{center}
\includegraphics[width=0.7\textwidth]{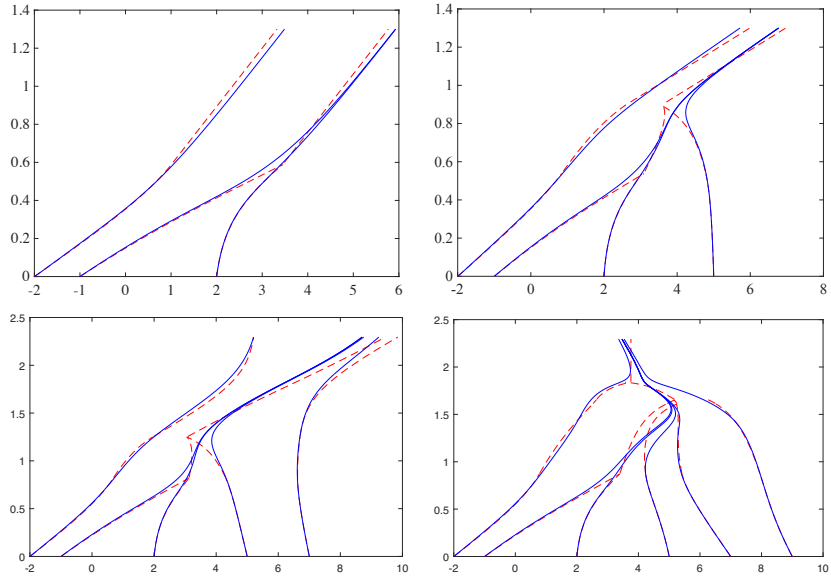}
\end{center}
\caption{The blue solid lines are approximated trajectories given by \eqref{approximateODE}, while the red dashed lines are the sticky trajectories.}
\label{fig:dispersivelimit}
\end{figure}
\begin{figure}[H]
\begin{center}
\includegraphics[width=0.7\textwidth]{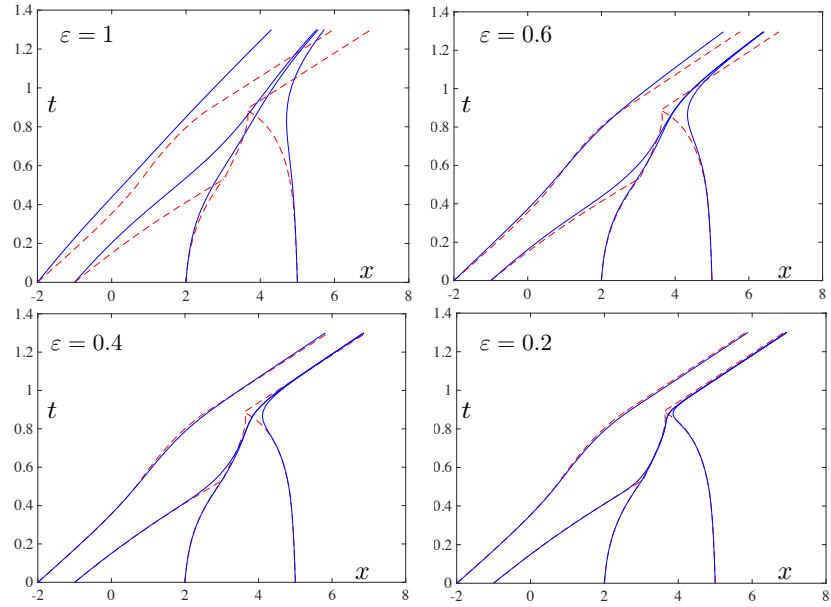}
\end{center}
\caption{Evolution of trajectories given by \eqref{approximateODE} as $\e\to0$ (blue solid lines), and comparison with sticky particle trajectories (red dashed lines). Momentum: $p_1=15,~p_2=2,~p_3=3,~p_4=-2$; initial positions: $x_1(0)=-2,~x_2(0)=-1,~x_3(0)=-2,~x_4(0)=5$.
The blue solid lines are regularized trajectories of four peakons $\{x^{\e}_i(t)\}_{i=1}^4$ given by \eqref{approximateODE}. The red dashed lines are trajectories of sticky four peakons.}
\label{fig:Four}
\end{figure}
Note that the regularized system \eqref{eq:approximateODE} for the $N$-peakon solutions can be equivalently reformulated as the regularization performed directly on the mCH equation \eqref{eq:mCH}:
\begin{align}\label{eq:m}
m_t+[\rho_\e\ast(u^2-u_x^2)m]_x=0,\quad m=u-u_{xx}.
\end{align}
Heuristically, we consider for smooth functions here. Since $\rho_\e$ is an even function, we have  the expansion 
\[
\rho_\e\ast f=\int_{\mathbb{R}}\rho_\e(x)f(x-y)\di y=\int_{\mathbb{R}}\rho_\e(x)\left[f(x)-yf'(x)+\frac{1}{2}y^2f_{xx}(x)+\cdots\right]\di y= f+\kappa\e^2 f_{xx}+O(\e^4)
\] 
with $\kappa=\int_{\mathbb{R}}y^2f(y)\di y/2$.
We do linearization of \eqref{eq:m} around the constant solutions.  Let $u = 1 + \delta v$ and $m = u -u_{xx} = 1 + \delta n$, where $n = v-v_{xx}$ and $\delta\in\mathbb{R}$. Keeping orders up to $\e^2$ and $\delta$, the following linearized
equation is obtained:
\[
n_t+(n+2v)_x+2\kappa\e^2 v_{xxx}+O(\delta)+O(\e^4)=0.
\]
The leading term corresponding to the mollification is a dispersive term $2\kappa\e^2v_{xxx}$. Hence, the regularized system \eqref{eq:approximateODE} has some dispersive effects. 

Next, we  show the collision avoidance of the dispersive regularization. 
\begin{proposition}\label{pro:nocollisionepsilon}
Let $\{p_i\}_{i=1}^N$ and $\{c_i\}_{i=1}^N$ be the same as in \eqref{eq:m0N}.  Let $\{x_i^\e(t)\}_{i=1}^N$ be the approximate trajectories given by \eqref{eq:approximateODE} subject to $x_i^\e(0)=c_i$, $i=1,\ldots, N$. Then, the approximate trajectories never collide with each other, i.e., $x^\e_1(t)<x^\e_2(t)<\cdots<x_N^\e(t)$ for all $t>0$. 
\end{proposition}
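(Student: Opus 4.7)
My plan is to argue by contradiction using the Lipschitz regularity that the mollifier $\rho_\e$ bestows on the spatial vector field $U^{N,\e}(\cdot,t)$. First, I would set
\[
t^* := \sup\{T > 0 : x_1^\e(t) < x_2^\e(t) < \cdots < x_N^\e(t) \text{ for every } t \in [0,T)\},
\]
which is strictly positive by continuity of the trajectories and the strict ordering of $\{c_i\}$. Assuming toward a contradiction that $t^* < \infty$, continuity forces $x_1^\e(t^*) \leq \cdots \leq x_N^\e(t^*)$ with at least one equality, say $x_{i_0}^\e(t^*) = x_{i_0+1}^\e(t^*)$, and the gap $y(t) := x_{i_0+1}^\e(t) - x_{i_0}^\e(t)$ satisfies $y>0$ on $[0,t^*)$ and $y(t^*)=0$.

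The key quantitative ingredient I would extract is a uniform Lipschitz bound on $U^{N,\e}(\cdot,t) = \rho_\e \ast U_\e^N(\cdot,t)$. Since $|G|,|G_x|\leq 1/2$ and $\sum_i|p_i|\leq M_0$, one has $|u^{N,\e}|,|\partial_x u^{N,\e}|\leq M_0/2$ pointwise, so $\|U_\e^N(\cdot,t)\|_{L^\infty}\leq M_0^2/2$ and therefore, by Young's inequality,
\[
L_\e := \sup_{t\in[0,t^*]} \|\partial_x U^{N,\e}(\cdot,t)\|_{L^\infty} \leq \|\rho_\e'\|_{L^1}\cdot \frac{M_0^2}{2} < \infty.
\]
Differentiating $y$ and applying this bound (at each fixed $t$ the full configuration $\{x_k^\e(t)\}$ is frozen, so only the first argument of $U^{N,\e}$ varies) would give
\[
|\dot{y}(t)| = \bigl|U^{N,\e}(x_{i_0+1}^\e(t),t) - U^{N,\e}(x_{i_0}^\e(t),t)\bigr| \leq L_\e\, y(t).
\]
Integrating backwards from $t^*$ with $y(t^*)=0$ then yields $y(t) \leq L_\e\int_t^{t^*} y(s)\,ds$ for $t<t^*$, and Gronwall's inequality (applied in reversed time, with zero terminal datum) forces $y \equiv 0$ on a left neighborhood of $t^*$. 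This contradicts $y>0$ there, completing the argument.

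The main obstacle worth flagging is not the Gronwall step itself but the verification that $\partial_x U^{N,\e}$ is genuinely bounded: the underlying field $U_\e^N$ assembled from an $N$-peakon profile is only BV --- $\partial_x u^{N,\e}$ has jumps at the particle positions, so $U_\e^N$ itself is discontinuous --- and no Lipschitz estimate survives without the outer convolution by $\rho_\e$. It is precisely this smoothing that rules out collisions at every $\e>0$ and underlies the role of \eqref{eq:approximateODE} as a selection principle; once the Lipschitz bound is secured, the non-collision property reduces to the soft Gronwall estimate above.
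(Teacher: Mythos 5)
Your argument is correct and is essentially the paper's own proof: both set up a contradiction at the first collision time, exploit that the convolution with $\rho_\e$ makes $x\mapsto U^{N,\e}(x,t)$ Lipschitz with an $\e$-dependent constant (you via Young's inequality $\|\rho_\e'\|_{L^1}\|U_\e^N\|_{L^\infty}$, the paper via a pointwise bound with $\|\partial_x\rho_\e\|_{L^\infty}$ over the integration interval), and then close with a Gronwall estimate on the gap $x_{k+1}^\e-x_k^\e$. The only cosmetic difference is that the paper runs Gronwall forward to get the positive lower bound $(c_{k+1}-c_k)e^{-Ct}$ rather than backward from the zero terminal datum; both are valid.
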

\begin{proof}
If collisions between $\{x_i^\e\}_{i=1}^N$ happen in finite time, we assume that the first collision is between  $x_k^\e$ and $x_{k+1}^\e$ for some $1\leq k\leq N-1$ at time $0<T_*<+\infty$. We have
\[
\frac{\di}{\di t}x^\e_k(t)=U^{N,\e}(x^\e_k(t),t)=\int_{\mathbb{R}}\rho_\e(x^\e_k(t)-x)U_\e^N(x,t)\di x,
\]
and
\[
\frac{\di}{\di t}x^\e_{k+1}(t)=(\rho_\e\ast U_\e^{N})(x^\e_{k+1}(t),t)=\int_{\mathbb{R}}\rho_\e(x^\e_{k+1}(t)-x)U^N_\e(x,t)\di x.
\]
where $U_\e^N$ is defined by \eqref{eq:UNepisilon}. For $t< T_*$, taking the difference gives
\[
\frac{\di}{\di t}(x_{k+1}^\e-x^\e_k)=\int_{\mathbb{R}}\Big[\rho_\e(x^\e_{k+1}(t)-x)-\rho_\e(x^\e_{k}(t)-x)\Big]U^N_\e(x,t)\di x.
\]
Because $\|U_\e^N\|_{L^\infty}\leq \frac{M_0^2}{2}$, we have $|x_i^\e(t)|\leq |c_i|+\frac{M_0^2}{2}T^*$ and
\begin{align*}
\left|\frac{\di}{\di t}(x_{k+1}^\e-x^\e_k)\right|\leq&\frac{M_0^2}{2}\int_{x_k^\e(t)-\e}^{x_{k+1}^\e(t)+\e}\left|\rho_\e(x^\e_{k+1}(t)-x)-\rho_\e(x^\e_{k}(t)-x)\right|\di x\\
\leq &\frac{M_0^2}{2}(2\e+x_{k+1}^\e-x_k^\e)\|\partial_x\rho_\e\|_{L^\infty}(x_{k+1}^\e-x_k^\e)\leq C(x_{k+1}^\e-x_k^\e),~~t< T_*,
\end{align*}
where $C=C(\e,c_i,T^*,M_0)$ is a constant depending on $\e,~c_i,~T^*,~M_0$.
Hence, for $t<T_*$ we have
\[
-C(x_{k+1}^\e-x_k^\e)\leq \frac{\di}{\di t}(x_{k+1}^\e-x^\e_{k})\leq  C(x_{k+1}^\e-x_k^\e),
\]
which implies
\[
0<(c_{k+1}-c_k)e^{-C t}\leq x_{k+1}^\e(t)-x_k^\e(t)~\textrm{ for }~t< T_*.
\]
This is a contradiction with the assumption for $T_*$. Hence, $x^\e_1(t)<x^\e_2(t)<\cdots<x_N^\e(t)$ for all $t>0$.

\end{proof}

The following theorem shows that before collision, the dispersion limit gives sticky peakons.
\begin{theorem}\label{thm:dispersionlimit}
Let $\{p_i\}_{i=1}^N$ and $\{c_i\}_{i=1}^N$ be the same as in \eqref{eq:m0N}. Let $\{x_i(t)\}_{i=1}^N$ be the sticky trajectories constructed in Section \ref{sec:sticky} with initial data $x_i(0)=c_i$, and $\{x^\e_i(t)\}_{i=1}^N$ be the approximated trajectories given by \eqref{approximateODE} with $x_i^\e(0)=c_i$. Then the approximated trajectories converge to the sticky trajectories at least up to the first collision time. More precisely, if the first collision between sticky trajectories $\{x_i(t)\}_{i=1}^N$ happens at $t_1>0$, then we have
\begin{align}\label{eq:limit}
\lim_{\e\to0}x_k^\e(t)=x_k(t),\quad k=1,\cdots,N,~~t\in[0,t_1].
\end{align}
\end{theorem}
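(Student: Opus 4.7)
\medskip

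\noindent\textbf{Proof plan for Theorem \ref{thm:dispersionlimit}.}

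The strategy is a compactness argument combined with a vector-field convergence of the mollified right-hand side, followed by uniqueness of the limit system before collision. First I would fix any $T>t_1$ and work on $[0,T]$. From the proof of Proposition \ref{pro:nocollisionepsilon} we already have the uniform bound $\|U^{N,\e}\|_{L^\infty}\leq M_0^2/2$, so each approximating trajectory satisfies the equi-Lipschitz estimate $|\dot x_i^\e(t)|\leq M_0^2/2$. Together with $x_i^\e(0)=c_i$, this makes the family $\{x_i^\e\}_{\e>0}$ uniformly bounded and equicontinuous on $[0,T]$. An Arzelà--Ascoli extraction produces a subsequence $\e_n\to 0$ and limits $\tilde x_i\in C([0,T])$, Lipschitz with constant $M_0^2/2$, such that $x_i^{\e_n}\to \tilde x_i$ uniformly on $[0,T]$ and $\tilde x_i(0)=c_i$.

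Next I would identify the limits on $[0,t_1)$. Fix any $t_\ast\in(0,t_1)$. Since the sticky solution has $x_1(t)<\cdots<x_N(t)$ on $[0,t_1)$ with continuous trajectories, there exists $\delta>0$ such that $x_{j+1}(t)-x_j(t)\geq 3\delta$ for all $t\in[0,t_\ast]$ and all $j$. Uniform convergence gives, for $n$ large, $x_{j+1}^{\e_n}(t)-x_j^{\e_n}(t)\geq 2\delta$ on $[0,t_\ast]$, and I may additionally take $\e_n<\delta$. The key claim is
\begin{equation*}
\lim_{n\to\infty}\sup_{t\in[0,t_\ast]}\bigl|U^{N,\e_n}(x_i^{\e_n}(t),t)-A_i^N(\tilde x_1(t),\ldots,\tilde x_N(t))\bigr|=0,
\end{equation*}
where $A_i^N$ is the right-hand side of \eqref{eq:ODE}. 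To establish this I would, on a fixed neighborhood of $x_i^{\e_n}(t)$ of radius $\delta$, split $U^N(\,\cdot\,;\{x_k^{\e_n}(t)\})=R^{\e_n}(\cdot,t)+h^{\e_n}(\cdot,t)\,\sgn(\,\cdot\,-x_i^{\e_n}(t))$, where $R^{\e_n}$ collects the part that is continuous at $x_i^{\e_n}(t)$ and $h^{\e_n}$ is continuous with $h^{\e_n}(x_i^{\e_n}(t),t)$ expressible through $\overline{u^N_x}(x_i^{\e_n}(t),t)$; this decomposition is explicit using $u^N_x=p_iG_x(\,\cdot\,-x_i^{\e_n})+\sum_{k\neq i}p_kG_x(\,\cdot\,-x_k^{\e_n})$ and the identity $G_x(z)=-\sgn(z)e^{-|z|}/2$. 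Because the support of $\rho_{\e_n}(x_i^{\e_n}(t)-\cdot)$ lies in $(x_i^{\e_n}(t)-\delta,x_i^{\e_n}(t)+\delta)$, the evenness of $\rho_{\e_n}$ kills the leading contribution of the $\sgn$ part and reduces it to $\int\rho_{\e_n}(z)[h^{\e_n}(x_i^{\e_n}(t)-z,t)-h^{\e_n}(x_i^{\e_n}(t),t)]\sgn(-z)\di z$, which tends to $0$ by equicontinuity of $h^{\e_n}$ (uniform on $[0,t_\ast]$ thanks to the uniform separation and smoothness of $G$ away from $0$). The smooth part yields $(\rho_{\e_n}\ast R^{\e_n})(x_i^{\e_n}(t),t)\to R(\tilde x_i(t),t)$ by the same equicontinuity combined with \eqref{eq:fact}, and a direct check shows $R(\tilde x_i(t),t)=[(u^N)^2-\overline{(u^N_x)^2}](\tilde x_i(t);\{\tilde x_k(t)\})=A_i^N(\tilde x(t))$.

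With the vector-field convergence in hand, I would pass to the limit in the integral formulation $x_i^{\e_n}(t)=c_i+\int_0^t U^{N,\e_n}(x_i^{\e_n}(s),s)\di s$ to get $\tilde x_i(t)=c_i+\int_0^t A_i^N(\tilde x(s))\di s$ for all $t\in[0,t_\ast]$. Hence $\tilde x$ solves \eqref{eq:ODE} on $[0,t_\ast]$ with initial data $c_i$ and strict ordering; by Picard--Lindelöf (the right-hand side of \eqref{eq:ODE} is smooth on the separated region), $\tilde x_i=x_i$ on $[0,t_\ast]$. Since $t_\ast<t_1$ was arbitrary, $\tilde x_i\equiv x_i$ on $[0,t_1)$, and uniform convergence together with continuity of $\tilde x_i$ and $x_i$ at $t_1$ yields $\tilde x_i(t_1)=x_i(t_1)$. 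Because the subsequential limit is always $x_i$, the full family $x_i^{\e}$ converges to $x_i$ uniformly on $[0,t_1]$, giving \eqref{eq:limit}.

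The principal obstacle is the vector-field convergence step: the integrand being mollified depends on $\e$ through the positions $\{x_k^\e(t)\}$, so \eqref{eq:fact}, as stated for a fixed BV function, does not apply off the shelf. The decomposition into a $\sgn$ part and a part continuous at $x_i^\e$ is tailored so that (i) the $\sgn$ piece is killed by symmetry of $\rho_\e$ up to an error controlled by an $\e$-independent modulus of continuity of $h^\e$, and (ii) the continuous part can be handled by equicontinuity and uniform convergence inherited from the separation bound. The required uniform control on $R^\e$ and $h^\e$ up to their first derivative follows from the fact that, once all peakon locations are separated by at least $2\delta$, each of these functions is a finite sum of $C^\infty$ terms with bounds depending only on $M_0$ and $\delta$, not on $\e$.
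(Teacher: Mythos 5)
Your overall architecture (equi-Lipschitz bounds from $\|U^{N,\e}\|_{L^\infty}\leq M_0^2/2$, Arzel\`a--Ascoli, identification of the limiting vector field, uniqueness for the limit ODE) is reasonable and genuinely different from the paper's argument, but as written it contains one circular step. You deduce the uniform separation $x_{j+1}^{\e_n}(t)-x_j^{\e_n}(t)\geq 2\delta$ on $[0,t_\ast]$ from ``uniform convergence'' together with the $3\delta$-separation of the \emph{sticky} trajectories $x_j$; however, at that point the only convergence you have is $x_j^{\e_n}\to\tilde x_j$, and you do not yet know that $\tilde x_j=x_j$, nor even that the $\tilde x_j$ are strictly ordered (Proposition \ref{pro:nocollisionepsilon} only guarantees $\tilde x_1\leq\cdots\leq\tilde x_N$ after passing to the limit). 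Since the separation of the approximate trajectories is exactly what feeds your vector-field convergence, this needs an independent source. The standard patch is a bootstrap: the speed bound alone gives $x_{k+1}^{\e}(t)-x_k^{\e}(t)\geq c_{k+1}-c_k-M_0^2\,t$ for every $\e$, hence an $\e$-independent positive lower bound on the gaps on an initial interval whose length depends only on the initial separations and $M_0$; run the identification argument there, conclude $\tilde x=x$ on that interval, and iterate from the new (still separated) positions up to any $\tau<t_1$. This is precisely how the paper organizes the proof.

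Beyond that, your decomposition $U^N=R^{\e}+h^{\e}\,\sgn(\cdot-x_i^{\e})$ is more machinery than this particular vector field requires: for mCH peakons $(u^N)^2-(u_x^N)^2$ is piecewise \emph{constant} between consecutive peakon locations (equal to $A_k^{\pm}$ on either side of $x_k^\e$). Consequently, once $\e$ is smaller than the minimal gap, the mollified system \eqref{eq:approximateODE} is not merely close to the averaged system \eqref{eq:ode1} --- by evenness of $\rho_\e$ it is \emph{identical} to it, so the paper concludes $x_k^\e(t)=x_k(t)$ exactly for all sufficiently small $\e$ on each subinterval before collision, with no compactness extraction, no modulus-of-continuity estimate for $h^{\e}$, and no appeal to \eqref{eq:fact}. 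Your route would extend to vector fields that are only piecewise continuous across the peakon locations, which is a genuine merit, but for this theorem the exact-coincidence argument is both shorter and stronger. The treatment of the endpoint $t=t_1$ via the uniform Lipschitz bound is the same in both arguments.
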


\begin{proof}
 Let $u^N(x,t)=\sum_{i=1}^Np_iG(x-x_i(t))$. From the construction of the sticky trajectories $\{x_i(t)\}_{i=1}^N$, the following ODE holds for any $0<t<t_1$:
\begin{align}\label{eq:sticky}
\frac{\di}{\di t}x_k(t)=\left[(u^N)^2-\overline{(u_x^N)^2}\right](x_k(t),t),\quad k=1,2,\cdots,N.
\end{align}
For  $t<t_1$ and $x_k(t)<x<x_{k+1}(t)$, we have
\begin{align}\label{eq:Ak+}
\left[(u^N)^2-(u_x^N)^2\right](x,t)=\sum_{j>k}p_jp_ke^{x_k(t)-x_j(t)}+\sum_{i<k<j}p_ip_je^{x_i(t)-x_j(t)}=:A_k^+(t).
\end{align}
For  $t<t_1$ and $x_{k-1}(t)<x<x_{k}(t)$,  we have
\begin{align}\label{eq:Ak-}
\left[(u^N)^2-(u_x^N)^2\right](x,t)=\sum_{i<k}p_ip_ke^{x_i(t)-x_k(t)}+\sum_{i<k<j}p_ip_je^{x_i(t)-x_j(t)}=:A_{k}^-(t).
\end{align}
From the definition, we have $A_k^+(t)=A_{k+1}^-(t)$, $A_1^-=A_N^+=0$, and
\begin{align}\label{eq:ode1}
\frac{\di }{\di t}x_k(t)=\frac{A_k^+(t)+A_k^-(t)}{2},\quad t<t_1, k=1,\cdots,N.
\end{align}
Due to collision avoidance of $x_i^\e(t)$, we define similarly
\begin{equation}
\begin{aligned}\label{eq:AE}
A_{\e k}^+(t):=&U_\e^N(x,t)=\sum_{j>k}p_jp_ke^{x^\e_k(t)-x^\e_j(t)}+\sum_{i<k<j}p_ip_je^{x^\e_i(t)-x^\e_j(t)},~~x_k^\e(t)<x<x_{k+1}^\e(t), \\
A_{\e k}^-(t):=&U_\e^N(x,t)=\sum_{i<k}p_ip_ke^{x^\e_i(t)-x^\e_k(t)}+\sum_{i<k<j}p_ip_je^{x^\e_i(t)-x^\e_j(t)},~~x_{k-1}^\e(t)<x<x_{k}^\e(t).
\end{aligned}
\end{equation}

Note that for any $\e>0$, we have
$
|U^{N,\e}|\leq \frac{M_0^2}{2},
$
where $U^{N,\e}$ was defined by \eqref{eq:UNepisilon}. Let
\begin{align*}
t^*:=\frac{2\min\{c_{k+1}-c_k,~~1\leq k\leq N-1\}}{M_0^2}.
\end{align*}
Then for any $\e>0$, we have
\begin{align*}
x_{k+1}^\e(t)-x_k^\e(t)>c_{k+1}-c_k-\frac{M_0^2}{2}t, \quad t\in[0,t^*).
\end{align*}
This implies that for any $\tilde{t}\in[0,t^*)$, there exists $\e_0$ small enough such that
\begin{align}\label{eq:uniforme}
\min_{1\leq k\geq N-1,t\in[0,\tilde{t}]}(x_{k+1}^\e(t)-x_{k}^\e(t))>\e_0.
\end{align}
Then for $0<\e<\e_0$ we have
\begin{align*}
\frac{\di }{\di t}x^\e_k(t)=\int_{\mathbb{R}}\rho_\e(x_k^\e(t)-y)U^N_\e(y,t)\di y=\int_{x_k^\e(t)-\e}^{x_k^\e(t)+\e}\rho_\e(x_k^\e(t)-y)U^N_\e(y,t)\di y.
\end{align*}
Combining \eqref{eq:AE} and \eqref{eq:uniforme} gives
\begin{align*}
\frac{\di }{\di t}x^\e_k(t)=&\int_{x_k^\e(t)-\e}^{x_k^\e(t)}\rho_\e(x_k^\e(t)-y)A_{\e k}^-(t)\di y+\int^{x_k^\e(t)+\e}_{x_k^\e(t)}\rho_\e(x_k^\e(t)-y)A_{\e k}^+(t)\di y\\
&=A_{\e k}^-(t)\int_{-\e}^0\rho_\e(y)\di y+A_{\e k}^+(t)\int^{\e}_0\rho_\e(y)\di y\\
&=\frac{A_{\e k}^-(t)+A_{\e k}^+(t)}{2},\quad k=1,\cdots, N.
\end{align*}
The above ODE system is the same as \eqref{eq:ode1}. Therefore, for $t\in[0,\tilde{t}]$ and $\e<\e_0$, we have $x_i^\e(t)=x_i(t)$. Because $\tilde{t}$ is arbitrarily chosen in $[0,t^*)$, we obtain
\begin{align*}
\lim_{\e\to0}x_k^\e(t)=x_k(t),\quad k=1,\cdots,N,~~t\in[0,t^*).
\end{align*}
If $t^*<t_1$, then for $\e_0>0$ small enough, we have 
\[
\min_{1\leq k\leq N-1,~t\in[0,t^*]}(x_{k+1}^\e(t)-x_{k}^\e(t))>\e_0
\]
for any $0<\e<\e_0$. Then the above proof shows that 
\[
x_k^\e(t)=x_k(t),\quad k=1,\cdots,N,~~ 0<\e<\e_0,~~t\in[0,t^*].
\]
Next, view $\{x_k(t^*)\}_{k=1}^N$ as new initial data and proceed the above process again. We can prove that for any $\tau\in[0,t_1)$, there exists $\e_\tau >0$ small enough such that
$x_k^\e(t)=x_k(t)$ for any $k=1,\cdots, N$, $t\in[0,\tau]$, and $0<\e<\e_\tau$. Therefore, we have
\begin{align}\label{eq:limit1}
\lim_{\e\to0}x_k^\e(t)=x_k(t),\quad k=1,\cdots,N,~~t\in[0,t_1).
\end{align}
For the convergence at $t=t_1$, let $\Delta t>0$ and we have 
\[
|x_k^\e(t_1)-x_k(t_1)|\leq |x_k^\e(t_1)-x_k^\e(t_1-\Delta t)|+|x_k^\e(t_1-\Delta t)-x_k(t_1-\Delta t)|+|x_k(t_1-\Delta t)-x_k(t_1)|.
\]
Since all the trajectories (approximated ones or sticky ones) are Lipschitz continuous with Lipschitz constant bounded by $M_0^2/2$, we can choose $\Delta t$ small enough such that the first and the third terms are small. Then we use \eqref{eq:limit1} to choose $\e$ small enough to get the smallness of the second term. This will finish the proof of \eqref{eq:limit}.

\end{proof}

Next, we prove the limit after collision rigorously for $N=3$. We have
\begin{proposition}\label{pro:dispersionlimit}
Let $\{p_i\}_{i=1}^3$ and $\{c_i\}_{i=1}^3$ be the same as in \eqref{eq:m0N} with $N=3$,  $\{x_i(t)\}_{i=1}^3$ be the sticky trajectories constructed in Section \ref{sec:sticky} with  $x_i(0)=c_i$, and $\{x^\e_i(t)\}_{i=1}^3$ be the approximated trajectories given by \eqref{approximateODE} with $x_i^\e(0)=c_i$. Then for any $t>0$, we have $x_i^\e(t)\to x_i(t)$ as $\e\to0$.
\end{proposition}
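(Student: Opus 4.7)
The plan is to combine Theorem \ref{thm:dispersionlimit}, which already gives $x_i^\e\to x_i$ on $[0,t_1]$ where $t_1\in(0,\infty]$ is the first sticky collision time, with a case analysis of the regularized ODE on $[t_1,T]$. A crucial simplification for $N=3$ is that the sticky dynamics has no further collision after $t_1$: a surviving $2$-peakon cluster has identical velocities by system \eqref{eq:ODE} with $N=2$, so its separation is preserved, and a triple collision produces a single stuck peakon whose velocity vanishes by \eqref{eq:ODE} with $N=1$. Hence it suffices to analyze a single interval $[t_1,T]$.

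Case A (an adjacent pair collides at $t_1$, say peakons $2$ and $3$). Setting $\delta^\e(t):=x_3^\e(t)-x_2^\e(t)$ and $R_\e(\delta):=\int_0^\delta\rho_\e(\eta)\,d\eta$, I carry out the piecewise-constant integration from the proof of Theorem \ref{thm:dispersionlimit}, now in the ``one close pair'' regime $\delta^\e<\e$ with $x_1^\e$ uniformly separated from $x_2^\e,x_3^\e$. This yields
\begin{equation*}
\dot{x}_2^\e=\tfrac12 A_{\e 1}^++A_{\e 2}^+R_\e(\delta^\e),\qquad \dot{x}_3^\e=A_{\e 1}^+\bigl(\tfrac12-R_\e(\delta^\e)\bigr)+A_{\e 2}^+R_\e(\delta^\e),
\end{equation*}
hence $\dot\delta^\e=-A_{\e 1}^+R_\e(\delta^\e)$ evaluated along the regularized trajectories. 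The sticky collision condition at $t_1$ is precisely $A_{\e 1}^+(t_1)>0$ (equivalent to the sticky identity $\dot\delta_{23}=-A_1^+/2<0$), and $A_{\e 1}^+\to p_1(p_2+p_3)e^{x_1(t)-x_2(t)}$ keeps its sign on $[t_1,T]$, so $\delta^\e$ is non-increasing past $t_1$. Combined with $\delta^\e(t_1)\to 0$ from Theorem \ref{thm:dispersionlimit}, this gives $\delta^\e\to 0$ uniformly on $[t_1,T]$. Plugging $R_\e(\delta^\e)\to 0$ back in, all three $\dot{x}_i^\e$ converge to the common post-collision sticky velocity $\tfrac12 p_1(p_2+p_3)e^{x_1(t)-x_2(t)}$, and a Gronwall comparison with the sticky ODE upgrades this to $x_i^\e\to x_i$ uniformly on $[t_1,T]$.

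Case B (triple collision at $t_1$). Here the sticky solution is constant, $x_i(t)\equiv x^*:=x_1(t_1)$ for $t\geq t_1$. The uniform Lipschitz bound \eqref{eq:Lipschitz} and $x_i^\e(t_1)\to x^*$ from Theorem \ref{thm:dispersionlimit} make $\{x_i^\e\}$ equibounded and equi-Lipschitz on $[t_1,T]$, so by Arzel\`a--Ascoli it suffices to show that every uniform subsequential limit $\tilde{x}_i$ equals $x^*$. Performing the same piecewise-constant integration in the triple-cluster regime $\delta_{12}^\e,\delta_{23}^\e<\e$, each $U^{N,\e}(x_i^\e)$ and each $\dot\delta_{jk}^\e$ becomes a bounded linear combination of the quantities $R_\e(\delta_{j'k'}^\e)$. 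Using $\delta_{13}=\delta_{12}+\delta_{23}$ together with the inequality $R_\e(a)+R_\e(b)-R_\e(a+b)\geq 0$ for $a,b\geq 0$ (immediate from the evenness and unimodality of $\rho_\e$), one controls $\max_{j,k}\delta_{jk}^\e(t)$ by its value at $t_1$, which tends to $0$; hence $R_\e(\delta_{jk}^\e)\to 0$, $U^{N,\e}(x_i^\e)\to 0$, and $\tilde{x}_i\equiv x^*$.

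The main obstacle is Case B. Because $R_\e$ is locally linear near $0$, the leading-order contributions cancel in the coupled ODEs for the three $\dot\delta_{jk}^\e$, so monotonicity and decay of the cluster diameter must be extracted from subleading corrections whose sign depends delicately on the triple-collision geometry. This is exactly where the $N=3$-specific fact that the sticky dynamics terminates at $t_1$ becomes essential, since otherwise one would also need to track the regularized trajectories through subsequent collision events.
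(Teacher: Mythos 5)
Your overall architecture matches the paper's: reduce to a single post-collision interval (correctly observing that for $N=3$ the sticky dynamics has no second collision), split into the pair-collision and triple-collision cases, integrate the mollified vector field piecewise to get $\dot\delta^\e=-A_{\e1}^+R_\e(\delta^\e)$ with $R_\e(\delta)=\int_0^\delta\rho_\e$, use the sign information forced by the sticky collision (the paper's claims \eqref{eq:claim} and \eqref{eq:claim2}) to get monotone decay of the gaps, and close with Gr\"onwall. However, there is a genuine gap at the pivotal step, and it appears twice. You infer ``$\delta^\e(t_1)\to0$ and $\delta^\e$ non-increasing, hence $\delta^\e\to0$, hence $R_\e(\delta^\e)\to0$.'' The last implication is false: $R_\e(\delta)=\int_0^{\delta/\e}\rho(\eta)\,\di\eta$ depends only on the ratio $\delta/\e$, and $\delta^\e\to0$ says nothing about $\delta^\e/\e$ (e.g.\ $\delta^\e=\e$ gives $R_\e(\delta^\e)=\tfrac12$ for every $\e$). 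What is actually needed is the strictly stronger statement $\lim_{\e\to0}(x_3^\e(t)-x_2^\e(t))/\e=0$ for each $t>t_1$ (the paper's \eqref{eq:strongclaim} and \eqref{eq:strongclaim2}), and this is the technical heart of the proof. The paper obtains it by contradiction: if the ratio stays $\geq a_0/2$ along a subsequence up to some $T_0$, then $R_\e(\delta^\e)\geq\int_0^{a_0/2}\rho>0$ there, the sign claims give a uniform negative upper bound $-A$ on $\dot\delta^{\e}$, and integrating from $t_1$ (where $\delta^{\e}(t_1)\to0$ by Theorem \ref{thm:dispersionlimit}) forces $\delta^{\e}(T_0)<0$, contradicting the collision avoidance of Proposition \ref{pro:nocollisionepsilon}. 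Your proposal never isolates this ratio estimate, so the step ``plugging $R_\e(\delta^\e)\to0$ back in'' is unjustified as written (the ingredients to repair it are present in your own display, but the argument must be made).

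Two further points. First, in Case A you assert that $A_{\e1}^+$ ``keeps its sign'' because the limit is $p_1(p_2+p_3)e^{x_1-x_2}$ with $p_2+p_3>0$; that positivity is not automatic from the mere occurrence of a sticky collision and is exactly what the paper's claim \eqref{eq:claim} proves (by ruling out $p_2+p_3e^{-(c_3-c_2)}\leq0$ and then $p_2+p_3\leq0$ through ODE comparison arguments), so it needs a proof rather than an appeal to ``the sticky collision condition.'' Second, you explicitly concede that Case B (the triple collision) is not completed: you note that the leading contributions cancel and that decay of the cluster diameter must come from ``subleading corrections.'' The paper's Case 3 avoids this difficulty by working with the two gaps separately: the sign claims give $A_{\e2}^+<0$ and $A_{\e2}^->0$, and the evenness and unimodality of $\rho_\e$ make each bracketed difference of integrals in \eqref{eq:dif1}--\eqref{eq:dif2} positive, so both gaps decrease individually and the same ratio-contradiction argument applies to each. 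As it stands, your proposal is an accurate outline of the paper's strategy but does not constitute a complete proof.
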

\begin{proof}
If the collision between the trajectories of three peakons happens at $t_1>0$, there will  be three possible cases:

\noindent Case 1:
\[
\left\{\begin{split}
&x_1(t)<x_2(t)<x_3(t),~~0\leq t<t_1;\\
&x_1(t)<x_2(t)=x_3(t),~~t\geq t_1.
\end{split}
\right.
\]

\noindent Case 2:
\[
\left\{\begin{split}
&x_1(t)<x_2(t)<x_3(t),~~0\leq t<t_1;\\
&x_1(t)=x_2(t)<x_3(t),~~t\geq t_1.
\end{split}
\right.
\]

\noindent Case 3:
\[
\left\{\begin{split}
&x_1(t)<x_2(t)<x_3(t),~~0\leq t<t_1;\\
&x_1(t)=x_2(t)=x_3(t),~~t\geq t_1.
\end{split}
\right.
\]
We will only prove Case 1 and Case 3. The proof of the second case is similar to Case 1.

\textbf{Proof of Case 1:}
First, we give some properties for $p_i$ under the assumption of Case 1. For $t<t_1$, we have
\begin{align}\label{eq:deriva12}
\frac{\di}{\di t}x_1(t)=\frac{1}{2}A_1^+(t)=\frac{1}{2}A_2^-(t),\quad\frac{\di}{\di t}x_2(t)=\frac{A_2^-(t)+A_2^+(t)}{2},
\end{align}
and
\begin{align}\label{eq:deriva3}
\frac{\di}{\di t}x_3(t)=\frac{A_3^-(t)+A_3^+(t)}{2}=\frac{1}{2}A_3^-(t)=\frac{1}{2}A_2^+(t).
\end{align}
Here $A_k^+$ and $A_k^-$ are defined by \eqref{eq:Ak+} and \eqref{eq:Ak-} for $N=3$.
Therefore, 
\begin{equation}\label{eq:prior}
\begin{aligned}
\frac{\di}{\di t}(x_3(t)-x_2(t))=&-\frac{1}{2}A_2^-(t)=-\frac{1}{2}p_1p_2e^{x_1(t)-x_2(t)}-\frac{1}{2}p_1p_3e^{x_1(t)-x_3(t)}\\
=&-\frac{1}{2}p_1e^{x_1(t)-x_2(t)}(p_2+p_3e^{x_2(t)-x_3(t)}).
\end{aligned}
\end{equation}
Without loss of generality, assume $p_1>0$. We claim that
\begin{align}\label{eq:claim}
p_2+p_3e^{-a}>0~\textrm{ for }~0\leq a\leq c_3-c_2=x_3(0)-x_2(0).
\end{align}
\begin{proof}[Proof of \eqref{eq:claim}]
We first prove the claim for $a>0$. We only need to prove for $a=c_3-c_2$. Seeking for a contradiction, we assume $p_2+p_3e^{-(c_3-c_2)}\leq 0.$ If $p_2+p_3e^{-(c_3-c_2)}= 0$, then $x_3(t)-x_2(t)\equiv c_3-c_3>0$ for $t\geq0$ is the unique solution of \eqref{eq:prior}, which is  a contradiction with the assumption that $x_3(t_1)-x_2(t_1)=0.$ If $p_2+p_3e^{-(c_3-c_2)}<0$, since $p_1>0$, from \eqref{eq:prior} we have
\[
\frac{\di}{\di t}(x_3(t)-x_2(t))\Big|_{t=0}=-\frac{1}{2}p_1e^{x_1(0)-x_2(0)}(p_2+p_3e^{-(c_3-c_2)})>0,
\]
which implies $x_3(t)-x_2(t)>x_3(0)-x_2(0)=c_3-c_2$ for small time $t>0$. Notice that $\sup_{0\leq t\leq t_1}|x_1(t)-x_2(t)|=b>0$ is bounded, by continuity of the right hand side  of \eqref{eq:prior}, the derivative of $x_3(t)-x_2(t)$ will be positive when $x_3(t)-x_2(t)$ is near $c_3-c_2$, which makes  $x_3(t)-x_2(t)$ increasing. Hence one can prove $x_3(t)-x_2(t)\geq c_3-c_2$ for $0\leq t\leq t_1$, which
gives a contradiction with the assumption that $x_3(t_1)-x_2(t_1)=0.$ 

For the case $a=0$,  we will prove the claim for two cases: $p_3>0$ or  $p_3<0$.   If $p_3>0$, then 
\[
p_2+p_3> p_2+p_3e^{c_2-c_3}>0.
\]
If $p_3<0$, then $p_2>0$. Otherwise from \eqref{eq:prior} we know that $x_3(t)-x_2(t)$ is nondecreasing, which is a contradiction. Due to claim \eqref{eq:claim} for the case $a>0$, we know that
\[
p_2+p_3\geq 0.
\]
In this case, if $p_2+p_3=0$, we have
\[
\frac{\di}{\di t}(x_3(t)-x_2(t))=-\frac{1}{2}p_1p_2e^{x_1(t)-x_2(t)}(1-e^{x_2(t)-x_3(t)})\geq -\frac{1}{2}p_1p_2(1-e^{x_2(t)-x_3(t)})
\]
which implies
\[
\frac{\di}{\di t}\ln[e^{x_3(t)-x_2(t)}-1]\geq -\frac{1}{2}p_1p_2.
\]
Hence,
\[
x_3(t)-x_2(t)\geq \ln \left[1+(e^{c_3-c_2}-1)e^{-\frac{1}{2}p_1p_2t}\right]>0, \quad 0<t<t_1.
\]
and it is a contradiction with the assumption that $x_3(t_1)-x_2(t_1)=0.$ This proves claim \eqref{eq:claim} for $a=0$.
\end{proof}

Next, we analyze the vector field of sticky peakons for $t\geq t_1$. The sticky peakon solution is
\[
u(x,t)=p_1G(x-x_1(t))+(p_2+p_3)G(x-x_2(t)), ~~t\geq t_1.
\]
When $x_1(t)<x<x_2(t)$, we have
\begin{align*}
\left(u^2-\overline{u^2_x}\right)(x,t) =& \left[\frac{1}{2}p_1 e^{x_1-x}+\frac{1}{2}(p_2+p_3) e^{x-x_2}\right]^2-\left[-\frac{1}{2}p_1 e^{x_1-x}+\frac{1}{2}(p_2+p_3) e^{x-x_2}\right]^2\\
=&p_1(p_2+p_3)e^{x_1-x_2}.
\end{align*}
When $x<x_1(t)$ or $x>x_2(t)$, we have $(u^2-\overline{u^2_x})(x,t)=0.$ Therefore, we have
\begin{align}\label{eq:123}
\frac{\di}{\di t} x_i(t) = \frac{1}{2}p_1(p_2+p_3)e^{x_1-x_2}=\frac{1}{2}p_1p_2e^{x_1-x_2}+\frac{1}{2}p_1p_3e^{x_1-x_3}=\frac{1}{2}A_1^{+}(t)=\frac{1}{2}A_2^{-}(t)
\end{align}
for $t\geq t_1,~~i=1,2,3.$

For the approximated trajectories given by system \eqref{eq:approximateODE}, we have
\[
\frac{\di }{\di t}x_1^\e(t)=\int_{\mathbb{R}}\rho_\e(x_1^\e(t)-y)U_\e^N(y,t)\di y=\int_{x_1^\e(t)-\e}^{x_1^\e(t)+\e}\rho_\e(x_1^\e(t)-y)U_\e^N(y,t)\di y
\]
For $\e$ small enough, we have $x_1^\e(t_1)+\e <x_2^\e(t_1)$. Therefore, we have
\[
U_\e^N(y,t)=[(u^\e)^2-(u_x^\e)^2](y,t)=A_{\e1}^{-}(t),~~x_1^\e(t)-\e<y<x_1^\e(t),
\]
and
\[
U_\e^N(y,t)=[(u^\e)^2-(u_x^\e)^2](y,t)=A_{\e1}^{+}(t)=p_1p_2e^{x^\e_1-x^\e_2}+p_1p_3e^{x^\e_1-x^\e_3},~~x_1^\e(t)<y<x_1^\e(t)+\e.
\]
Using $A_{\e1}^{-}(t)=0$, we have
\begin{align}\label{eq:1e}
\frac{\di }{\di t}x_1^\e(t)=\int_{x_1^\e(t)}^{x_1^\e(t)+\e}\rho_\e(x_1^\e(t)-y)\di y A_{\e1}^{+}(t)=\frac{1}{2}A_{\e1}^{+}(t),~~t\geq t_1.
\end{align}
Similarly, 
\begin{equation}\label{eq:2e}
\begin{aligned}
\frac{\di}{\di t}x_2^\e(t)=&\int_{x_2^\e(t)-\e}^{x_2^\e(t)+\e}\rho_\e(x_2^\e(t)-y)U_\e^N(y,t)\di y\\
=&\int_{x_2^\e(t)-\e}^{x_2^\e(t)}\rho_\e(x_2^\e(t)-y)\di y\cdot A_{\e2}^{-}(t)+\int_{x_2^\e(t)}^{x_3^\e(t)}\rho_\e(x_2^\e(t)-y)\di y\cdot A_{\e2}^{+}(t)\\
=&\frac{1}{2}A_{\e2}^{-}(t)+\int_{0}^{x_3^\e(t)-x_2^\e(t)}\rho_\e(y)\di y\cdot A_{\e2}^{+}(t),
\end{aligned}
\end{equation}
and
\begin{equation}\label{eq:3e}
\begin{aligned}
\frac{\di}{\di t}x_3^\e(t)=&\int_{x_3^\e(t)-\e}^{x_3^\e(t)+\e}\rho_\e(x_3^\e(t)-y)U_\e^N(y,t)\di y
=\int_{x_3^\e(t)-\e}^{x_3^\e(t)}\rho_\e(x_3^\e(t)-y)U_\e^N(y,t)\di y\\
=&\int_{-\e}^{x_2^\e(t)-x_3^\e(t)}\rho_\e(y)\di y\cdot A_{\e2}^{-}(t)+\int^0_{x_2^\e(t)-x_3^\e(t)}\rho_\e(y)\di y\cdot A_{\e2}^{+}(t).
\end{aligned}
\end{equation}
Here, we used the fact that $A_{\e3}^+(t)=0$. Since $\rho_\e$ is even, taking the difference of \eqref{eq:2e} and \eqref{eq:3e} yields
\begin{equation}\label{eq:differ}
\begin{aligned}
\frac{\di}{\di t}(x_3^\e(t)-x_2^\e(t))&=-\int^0_{x_2^\e(t)-x_3^\e(t)}\rho_\e(y)\di y\cdot A_{\e2}^{-}(t)\\
&=-\int^0_{x_2^\e(t)-x_3^\e(t)}\rho_\e(y)\di y\left[p_1e^{x_1^\e(t)-x_2^\e(t)}(p_2+p_3e^{x_2^\e(t)-x_3^\e(t)})\right].
\end{aligned}
\end{equation}
Combining \eqref{eq:claim} and \eqref{eq:differ} gives 
\begin{align}\label{eq:decay}
\frac{\di}{\di t}(x_3^\e(t)-x_2^\e(t))<0,\quad t>t_1.
\end{align}
Next, we are going to use \eqref{eq:differ} to show that the following key estimate holds for any $t>t_1$:
\begin{align}\label{eq:strongclaim}
\lim_{\e\to0}\frac{x_3^\e(t)-x_2^\e(t)}{\e}=0.
\end{align}
\begin{proof}[Proof of \eqref{eq:strongclaim}]
We prove this by a contradiction argument. If \eqref{eq:strongclaim} does not hold, there exists $T_0>t_1$ and a subsequence $\e_n\to0$ such that
\[
\frac{x_3^{\e_n}(T_0)-x_2^{\e_n}(T_0)}{\e_n}\to a_0>0,\quad n\to+\infty.
\]
Consider $N>0$ big enough such that for any $t\in [t_1,T_0)$
\[
\frac{x_3^{\e_n}(t)-x_2^{\e_n}(t)}{\e_n}>\frac{x_3^{\e_n}(T_0)-x_2^{\e_n}(T_0)}{\e_n}\geq \frac{a_0}{2},\quad n\geq N.
\]
Consider in the time interval $[t_1,T_0]$. Since $|\frac{\di}{\di t}x_i^\e(t)|\leq \frac{1}{2}M_0^2$ ($M_0$ given by \eqref{eq:m0N}) for any $t>0$ and $\e>0$, we have $|x^\e_i(t)|\leq M_0^2T_0/2+|c_i|$ for any $0<t\leq T_0$ and $\e>0$.
For $n\geq N$ and $t\in[t_1,T_0]$, we have
\begin{multline}\label{eq:estimate}
\frac{\di}{\di t}(x_3^{\e_n}(t)-x_2^{\e_n}(t))=-\int^0_{x_2^{\e_n}(t)-x_3^{\e_n}(t)}\rho_{\e_n}(y)\di y\left[p_1e^{x_1^{\e_n}(t)-x_2^{\e_n}(t)}(p_2+p_3e^{x_2^{\e_n}(t)-x_3^{\e_n}(t)})\right]\\
\leq -\int_0^{a_0/2}\rho(y)\di y\left[p_1e^{-M_0^2T_0-|c_1|-|c_2|}\min\{p_2+p_3e^{c_2-c_3},~p_2+p_3\}\right]=:-A.
\end{multline}
Due to \eqref{eq:claim}, we know that $A$ is a positive constant independent of time $t\in[t_1,T_0]$ and $n\geq N$. 
Due to \eqref{eq:limit} and the fact that $x_2(t_1)=x_3(t_1)$, we can  choose $n_0\geq N$ big enough such that $x_3^{\e_{n_0}}(t_1)-x_2^{\e_{n_0}}(t_1)<A(T_0-t_1)$. Taking integration of \eqref{eq:estimate} in $[t_1,T_0]$ gives
\[
x_3^{\e_{n_0}}(T_0)-x_2^{\e_{n_0}}(T_0)\leq -A(T_0-t_1)+x_3^{\e_{n_0}}(t_1)-x_2^{\e_{n_0}}(t_1)<0,
\]
which is a contradiction with Proposition \ref{pro:nocollisionepsilon}. Hence \eqref{eq:strongclaim} holds.
\end{proof}

From \eqref{eq:strongclaim}, we have for any $t>t_1$ that
\[
h(\e,t):=\int_{0}^{x_3^\e(t)-x_2^\e(t)}\rho_\e(y)\di y=\int_{0}^{\frac{x_3^\e(t)-x_2^\e(t)}{\e}}\rho(y)\di y\to0,\quad \e\to0.
\]
Moreover, for each $\e>0$, $h(\e,\cdot)$ is a non-increasing function for $t>t_1$ since \eqref{eq:decay}.
Since $\rho_\e$ is an even function, from \eqref{eq:2e} and \eqref{eq:3e}, we obtain
\begin{align}\label{eq:2ee}
\frac{\di}{\di t}x_2^\e(t)=\frac{1}{2}A^{-}_{\e2}+C_1h(\e,t),\quad \frac{\di}{\di t}x_3^\e(t)=\frac{1}{2}A^{-}_{\e2}+C_2h(\e,t).
\end{align}
Here, for any bounded time interval $[0,T]$, $C_1$ and $C_2$ are positive constants depending on $T$ and $M_0$.
Combining \eqref{eq:123}, \eqref{eq:1e}, and \eqref{eq:2ee}, we have
\begin{align*}
\frac{\di}{\di t}\sum_{i=1}^3|x_i^\e(t)-x_i(t)|\leq& \sum_{i=1}^3\left|\frac{\di}{\di t}(x_i^\e(t)-x_i(t))\right|=\frac{3}{2}|A_{\e2}^{-}-A_2^-|+Ch(\e,t)\\
\leq& C\left(\sum_{i=1}^3|x_i^\e(t)-x_i(t)|+h(\e,t)\right)
\end{align*}
for some constant $C$ depending on $T$ and $M_0$.
By  the Gr\"onwall's inequality, we have
\[
\sum_{i=1}^3|x_i^\e(t)-x_i(t)|\leq \left(\sum_{i=1}^3|x_i^\e(t_1)-x_i(t_1)|+ C\int_{t_1}^th(\e,s)\di s\right)e^{C(t-t_1)}.
\]
Since $0\leq h(\e,t)\leq \frac{1}{2}$ for any $\e>0$ and $t>0$, by Lebesgue's dominate convergence theorem, we have 
\[
\lim_{\e\to0}\int_{t_1}^th(\e,s)\di s=\int_{t_1}^t\lim_{\e\to0}h(\e,s)\di s=0.
\]
Hence,
\[
\sum_{i=1}^3|x_i^\e(t)-x_i(t)|\to 0,\quad \e\to 0,~~t\in[0,T].
\]

\textbf{Proof of Case 3:} From \eqref{eq:deriva12}, we have
\[
\frac{\di}{\di t}(x_2(t)-x_1(t))=\frac{1}{2}A_2^+=\frac{1}{2}p_2p_3e^{x_2(t)-x_3(t)}+\frac{1}{2}p_1p_3e^{x_1(t)-x_3(t)}=\frac{1}{2}p_3e^{x_2(t)-x_3(t)}(p_2+p_1e^{x_1(t)-x_2(t)}).
\]
In the proof of Case 1, we have already assumed $p_1>0$ without loss of generality. Here, we assume $p_3>0$. Then, we must have $p_2<0$ and 
\begin{align}\label{eq:claim2}
p_2+p_1e^{-a}<0,\quad 0\leq a\leq c_2-c_1.
\end{align}
The proof of \eqref{eq:claim2} is the same as \eqref{eq:claim}. 

After the collision time $t_1$, we have
\[
x_i(t)\equiv x_1(t_1),~~t\geq t_1,~~i=1,2,3.
\]
For the regularized trajectories, similar to \eqref{eq:1e}, \eqref{eq:2e} and \eqref{eq:3e}, we have
\begin{multline}\label{eq:1e2}
\frac{\di }{\di t}x_1^\e(t)=\int_{x_1^\e(t)}^{x_2^\e(t)}\rho_\e(x_1^\e(t)-y)\di y A_{\e1}^{+}(t)+\int_{x_2^\e(t)}^{x_3^\e(t)}\rho_\e(x_1^\e(t)-y)\di y A_{\e2}^{+}(t)\\
=\int_{0}^{x_2^\e(t)-x_1^\e(t)}\rho_\e(y)\di y A_{\e2}^{-}(t)+\int_{x_2^\e(t)-x_1^\e(t)}^{x_3^\e(t)-x_1^\e(t)}\rho_\e(y)\di y A_{\e2}^{+}(t),
\end{multline}
\begin{equation}\label{eq:2e2}
\begin{aligned}
\frac{\di}{\di t}x_2^\e(t)=&\int_{x_2^\e(t)-\e}^{x_2^\e(t)+\e}\rho_\e(x_2^\e(t)-y)U_\e^N(y,t)\di y\\
=&\int_{x_1^\e(t)}^{x_2^\e(t)}\rho_\e(x_2^\e(t)-y)\di y\cdot A_{\e2}^{-}(t)+\int_{x_2^\e(t)}^{x_3^\e(t)}\rho_\e(x_2^\e(t)-y)\di y\cdot A_{\e2}^{+}(t)\\
=&\int^{x_2^\e(t)-x_1^e(t)}_{0}\rho_\e(y)\di yA_{\e2}^{-}(t)+\int_{0}^{x_3^\e(t)-x_2^\e(t)}\rho_\e(y)\di y\cdot A_{\e2}^{+}(t),
\end{aligned}
\end{equation}
and
\begin{align}\label{eq:3e2}
\frac{\di}{\di t}x_3^\e(t)=\int_{x_3^\e(t)-x_2^\e(t)}^{x_3^\e(t)-x_1^\e(t)}\rho_\e(y)\di y\cdot A_{\e2}^{-}(t)+\int_0^{x_3^\e(t)-x_2^\e(t)}\rho_\e(y)\di y\cdot A_{\e2}^{+}(t),
\end{align}
where we used  $A_{\e1}^{-}(t)=A_{\e3}^+(t)=0$. The above relations yield
\begin{align}\label{eq:dif1}
\frac{\di}{\di t}(x_2^\e(t)-x_1^\e(t))=\left(\int_{0}^{x_3^\e(t)-x_2^\e(t)}\rho_\e(y)\di y-\int_{x_2^\e(t)-x_1^\e(t)}^{x_3^\e(t)-x_1^\e(t)}\rho_\e(y)\di y\right)\cdot A_{\e2}^{+}(t)
\end{align}
and
\begin{align}\label{eq:dif2}
\frac{\di}{\di t}(x_3^\e(t)-x_2^\e(t))=\left(\int_{x_3^\e(t)-x_2^\e(t)}^{x_3^\e(t)-x_1^\e(t)}\rho_\e(y)\di y-\int^{x_2^\e(t)-x_1^\e(t)}_{0}\rho_\e(y)\di y\right)\cdot A_{\e2}^{-}(t)
\end{align}
Due to \eqref{eq:claim} and \eqref{eq:claim2}, we know that $A_{\e2}^{+}(t)<0$ and $A_{\e2}^{-}(t)>0$. Since $\rho_\e$ is an even function and increasing in $(-\e,0)$, we have
\[
\frac{\di}{\di t}(x_2^\e(t)-x_1^\e(t))<0,\quad \frac{\di}{\di t}(x_3^\e(t)-x_2^\e(t))<0.
\]
Using \eqref{eq:dif1} and \eqref{eq:dif2},  by similar method as proof of \eqref{eq:strongclaim}, we can obtain that for any $t>t_1$,
\begin{align}\label{eq:strongclaim2}
\lim_{\e\to0}\frac{x_3^\e(t)-x_2^\e(t)}{\e}=0~\textrm{ and }~ \lim_{\e\to0}\frac{x_2^\e(t)-x_1^\e(t)}{\e}=0,
\end{align}
or equivalently
\begin{align}\label{eq:strongclaim3}
	\lim_{\e\to0}\frac{x_3^\e(t)-x_1^\e(t)}{\e}=0.
\end{align}
Then the rest can be obtained similarly as Case 1.
\end{proof}
\begin{remark}
For $N=3$, there is only at most one time for collision between peakons. However, for the cases $N>3$, the collision between peakons is more complicated, and there are many different situations. Hence, it is very difficult to apply the method in Proposition \ref{pro:dispersionlimit} for  $N>3$. 
\end{remark}

\section{Splitting and non-uniqueness}\label{sec:splitting}
In this section, we will show that if the splitting of peakons happens, then the peakons are not unique, which might be different with the CH equation \eqref{eq:CH}.  We will provide some examples to illustrate this.
Consider one peakon starting from $(p,x_0)=(4,0)$. Then, one of the energy conservative solution is given by 
\begin{align}\label{eq:u1}
u_1(x,t)=pG(x-x_0).
\end{align}
Split the initial datum into three peakons $(p_1,x_1(0))=(5,0)$, $(p_2,x_2(0))=(-4,0)$, $(p_3,x_3(0))=(3,0)$, and then
\[
pG(x-x_0)=p_1G(x-x_1(0))+p_2G(x-x_2(0))+p_3G(x-x_3(0)).
\]
Accordingly, set $m_0(x)=p\delta(x-x_0)$ and $\hat{m}_0(x)=\sum_{i=1}^3p_i\delta(x-x_i(0))$. Then we actually have $m_0(x)=\hat{m}_0(x).$
Consider the following ODE system given by \eqref{eq:ODE} for $N=3$:
\begin{equation}
\left\{
\begin{aligned}
&\frac{\di}{\di t}x_1(t)=\frac{1}{2}p_1p_2e^{x_1(t)-x_2(t)}+\frac{1}{2}p_1p_3e^{x_1(t)-x_3(t)}=:A_1(t),\\
&\frac{\di}{\di t}x_2(t)=\frac{1}{2}p_1p_2e^{x_1(t)-x_2(t)}+\frac{1}{2}p_2p_3e^{x_2(t)-x_3(t)}+p_1p_3e^{x_1(t)-x_3(t)}=:A_2(t),\\
&\frac{\di}{\di t}x_3(t)=\frac{1}{2}p_1p_3e^{x_1(t)-x_3(t)}+\frac{1}{2}p_2p_3e^{x_2(t)-x_3(t)}=:A_3(t),
\end{aligned}
\right.
\end{equation}
with initial data $x_i(0)=0$ for $i=1,2,3$. Then, we have global trajectories $\{x_i(t)\}_{i=1}^3$. Moreover, notice that $A_1(0)<A_2(0)<A_3(0)$, which implies $x_1(t)<x_2(t)<x_3(t)$ for some small $T>0$ and $t\in (0,T)$. Let
\begin{align}\label{eq:u}
u(x,t)=p_1G(x-x_1(t))+p_2G(x-x_2(t))+p_3G(x-x_3(t)),\quad t\in(0,T).
\end{align}
Then, we have
\[
\frac{\di}{\di t}x_i(t)=u^2(x_i(t),t)-\overline{u^2_x}(x_i(t),t),\quad i=1,2,3.
\]
Since
\[
-\int_{\mathbb{R}}\varphi(x,0) m_0(\di x)=-\int_{\mathbb{R}}\varphi(x,0) \hat{m}_0(\di x)
\]
holds for any test function $\varphi\in C_c^\infty(\mathbb{R}\times[0,T))$,
using \eqref{useful},  we know  that $u_1$ and $u$ defined by \eqref{eq:u1} and \eqref{eq:u} are different conservative weak solutions to the mCH equation with the same initial data $pG(x-x_0)$ (or $m_0$ and $\hat{m}_0$) in $[0,T)$.

Note that this is different from the peakons for the CH equation \eqref{eq:CH}. For the CH equation, the $N$-peakon $u(x,t)=\sum_{i=1}^Np_i(t)e^{-|x-x_i(t)|}$ satisfies
\begin{equation}\label{eq:CHpeakons}
\left\{
\begin{aligned}
	&\frac{\di}{\di t}x_i(t)=\sum_{j=1}^Np_j(t)e^{-|x_i(t)-x_j(t)|},\\
	&\frac{\di}{\di t}p_i(t)=\sum_{j=1}^Np_i(t)p_j(t)\sgn(x_i(t)-x_j(t))e^{-|x_i(t)-x_j(t)|},
\end{aligned}
\right.
\end{equation}
which is a Hamiltonian system with Hamiltonian 
\[
\mathcal{H}_0(t)=\sum_{i,j=1}^Np_i(t)p_j(t)G(x_i(t)-x_j(t)).
\] 
Next, we are going to provide an example to show that even if we allow splitting of peakons, the above system will still give us the same solution. Consider one peakon $u(x,t)=p(t)e^{-|x-x(t)|}$ with $p(0)=c$ and $x(0)=x_0$. Using $\sgn(0)=0$, from \eqref{eq:CHpeakons}, we have
\begin{equation*}
\left\{
\begin{aligned}
&\frac{\di}{\di t}x(t)=p(t),\\
&\frac{\di}{\di t}p(t)=0,
\end{aligned}
\right.
\end{equation*}
which implies $u(x,t)=ce^{-|x-ct-x_0|}$. Assume that the peakon is split into two peakons: 
\[
u_2(x,t)=p_1(t)e^{-|x-x_1(t)|}+p_2(t)e^{-|x-x_2(t)|},
\]
and the initial data satisfy $P_1(0)+P_2(0)=c$ and $x_1(0)=x_2(0)=x_0$. We aim to show that the two peakons solution $u_2$ obtained by \eqref{eq:CHpeakons} is exactly the one peakon solution, i.e.,
\begin{align}\label{equiv}
u_2(x,t)=u(x,t)=ce^{-|x-ct-x_0|}.
\end{align}
Actually, from \eqref{eq:CHpeakons}, we have
\begin{equation*}
\left\{
\begin{aligned}
&\frac{\di}{\di t}x_1(t)=p_1(t)+p_2(t)e^{-|x_1(t)-x_2(t)|},\\
&\frac{\di}{\di t}x_2(t)=p_1(t)e^{-|x_1(t)-x_2(t)|}+p_2(t),\\
&\frac{\di}{\di t}p_1(t)=p_1(t)p_2(t)\sgn(x_1(t)-x_2(t))e^{-|x_1(t)-x_2(t)|},\\ 
&\frac{\di}{\di t}p_2(t)=p_1(t)p_2(t)\sgn(x_2(t)-x_1(t))e^{-|x_1(t)-x_2(t)|}.
\end{aligned}
\right.
\end{equation*}
Combining the last two equations, we obtain $\frac{\di}{\di t}(p_1(t)+p_2(t))=0$, which implies $p_1(t)+p_2(t)\equiv c$ for $t\geq0$. 
Then for $x_1$ and $x_2$, we have
\begin{equation*}
\left\{
\begin{aligned}
&\frac{\di}{\di t}x_1(t)=p_1(t)+(c-p_1(t))e^{-|x_1(t)-x_2(t)|},\\
&\frac{\di}{\di t}x_2(t)=p_1(t)e^{-|x_1(t)-x_2(t)|}+(c-p_1(t)).
\end{aligned}
\right.
\end{equation*}
By standard ODE theory, there exists a unique global solution $(x_1(t),x_2(t))$. 
Since $x_1(0)=x_2(0)=x_0$, one can check that $x_1(t)\equiv x_2(t)=ct+x_0$ is the unique solution. Therefore we have
\[
u_2(x,t)=p_1(t)e^{-|x-x_1(t)|}+p_2(t)e^{-|x-x_2(t)|}=ce^{-|x-ct-x_0|}=u(x,t).
\]

\noindent\textbf{Acknowledgements:}
This paper is supported by the National Natural Science Foundation grant 12101521 of China and the Start-up fund from The Hong Kong Polytechnic University with project number P0036186.

\appendix

\section{Proof of \eqref{eq:fact}}\label{fact}
\begin{proof}
Assume $x_0=0$ in \eqref{eq:fact}. Let $f:\mathbb{R}\to\mathbb{R}$ be a bounded piecewise continuous function with a jump discontinuity at $x=0$. Assume that the restrictions $f:[-1,0)\to\mathbb{R}$ and $f:(0,1]\to\mathbb{R}$ are continuous. Denote $f(0-):=\lim_{x\to0-}f(x)$ and $f(0+):=\lim_{x\to0+}f(x)$. Then we have
\begin{equation}
\begin{aligned}
(\rho_\e\ast f)(0)=\int_{\mathbb{R}}\rho_\e(0-y)f(y)\di y=\int_{-\e}^0\rho_\e(y)f(y)\di y+\int_{0}^\e\rho_\e(y)f(y)\di y,
\end{aligned}
\end{equation}
where we used the fact that $\rho_\e$ is an even function. By the continuity of $f$ at the right and left hand side of $x=0$, for any $\delta>0$, there exists $\e_0>0$ small enough such that for $|y|<\e<\e_0$, we have
\[
|f(y)-f(0-)|<\delta,\quad -\e<y<0,
\]
and
\[
|f(y)-f(0+)|<\delta,\quad 0<y<\e.
\]
Choosing $\e<\e_0$ and together with the fact that $\int_{0}^\e\rho_\e(y)\di y=\int_{-\e}^0\rho_\e(y)\di y=\frac{1}{2}$, we have
\begin{equation}
	\begin{aligned}
		|(\rho_\e\ast f)(0)-\bar{f}(0)|=&\left|\int_{-\e}^0\rho_\e(y)f(y)\di y+\int_{0}^\e\rho_\e(y)f(y)\di y-\frac{f(0-)+f(0+)}{2}\right|\\
		\leq &\int_{-\e}^0\rho_\e(y)|f(y)-f(0-)|\di y+\int_{0}^\e\rho_\e(y)|f(y)-f(0+)|\di y<\delta.
	\end{aligned}
\end{equation}
Therefore $\lim_{\e\to0}(\rho_\e\ast f)(0)=\bar{f}(0)$.

\end{proof}

\bibliographystyle{plain}
\bibliography{bibofPeak}

\end{document}